\newcommand{\R}{\ensuremath{\mathbb{R}}}
\begin{document}
\title{Borderline variational problems involving fractional Laplacians and critical singularities}
\author{Nassif Ghoussoub\footnote{This research was partially supported by a grant from the Natural Science and Engineering Research Council of Canada (NSERC). It was done while the authors were visiting the Fields Institute for Research in  Mathematical Sciences (Toronto). } \quad and \quad Shaya Shakerian\footnote{This work is part of a PhD thesis prepared by this author under the supervision of N. Ghoussoub.} \\
{\it\small Department of Mathematics}\\
{\it\small  University of British Columbia}\\
{\it\small Vancouver BC Canada V6T 1Z2}\\
\\\\
\date{Revised May 10, 2015}}

 \maketitle

\theoremstyle{definition}
\newtheorem{definition}{definition}[section]
\newtheorem{theorem}[definition]{Theorem}
\newtheorem{lemma}[definition]{Lemma}
\newtheorem{proposition}[definition]{Proposition}
\newtheorem{corollary}[definition]{Corollary}
\newtheorem{remark}[definition]{Remark}
\theoremstyle{definition}
\newtheorem{example}[definition]{Example}
\newtheorem{step}{Step}
\newtheorem{claim}[definition]{Claim}
\newtheorem{case}{Case}

\begin{abstract} We consider the problem of attainability of the best constant in the following critical fractional Hardy-Sobolev inequality:
\begin{equation*} 
\mu_{\gamma,s}(\R^n):= \inf\limits_{u \in H^{\frac{\alpha}{2}} (\R^n)\setminus \{0\}} \frac{ \int_{\R^n} |({-}{ \Delta})^{\frac{\alpha}{4}}u|^2 dx - \gamma \int_{\R^n} \frac{|u|^2}{|x|^{\alpha}}dx }{(\int_{\R^n} \frac{|u|^{2_{\alpha}^*(s)}}{|x|^{s}}dx)^\frac{2}{2_{\alpha}^*(s)}},
\end{equation*}
where  $0\leq s<\alpha<2$, $n>\alpha$, 
${2_{\alpha}^*(s)}:=\frac{2(n-s)}{n-{\alpha}},$ and  $\gamma \in \mathbb{R}$. This allows us to establish the existence of nontrivial weak solutions for the following doubly critical problem 
on $\R^n$,  
 \begin{equation*} 
\left\{\begin{array}{lll}
({-}{ \Delta})^{\frac{\alpha}{2}}u- \gamma \frac{u}{|x|^{\alpha}}&= |u|^{2_{\alpha}^*-2} u + {\frac{|u|^{2_{\alpha}^*(s)-2}u}{|x|^s}} & \text{in }  {\R^n}\\
\hfill u&>0  & \text{in }  \R^n,
\end{array}\right.
\end{equation*}
where  
$2_{\alpha}^*:=\frac{2 n}{n-{\alpha}}$ is the critical $\alpha$-fractional Sobolev exponent, and $\gamma < \gamma_H:=2^\alpha \frac{\Gamma^2(\frac{n+\alpha}{4})}{\Gamma^2(\frac{n-\alpha}{4})}$, the latter being the best fractional Hardy constant on $\R^n$.

\end{abstract}

\section{Introduction}
\numberwithin{equation}{section}
We consider the problem of existence of nontrivial  weak solutions to the following doubly critical problem 
on $\R^n$ involving the Fractional Laplacian:

 \begin{equation}\label{Main problem}
\left\{\begin{array}{lll}
({-}{ \Delta})^{\frac{\alpha}{2}}u- \gamma \frac{u}{|x|^{\alpha}}&= |u|^{2_{\alpha}^*-2} u + {\frac{|u|^{2_{\alpha}^*(s)-2}u}{|x|^s}} & \text{in }  {\R^n}\\
\hfill u&>0  & \text{in }  \R^n,
\end{array}\right.
\end{equation}
where  $0\leq s<\alpha<2$, $n>\alpha$, 
$2_{\alpha}^*:=\frac{2 n}{n-{\alpha}},$ ${2_{\alpha}^*(s)}:=\frac{2(n-s)}{n-{\alpha}},$  $\gamma \in \mathbb{R}$. The fractional Laplacian $({-}{ \Delta})^{\frac{\alpha}{2}}$ is  defined
 on the Schwartz class (space of rapidly decaying $C^\infty$ functions in $\R^n$) through the Fourier transform,
$$
 (-\Delta)^{\frac{\alpha}{2}}u= \mathcal{F}^{-1}(|\xi|^{\alpha}(\mathcal{F}u)) \quad \forall\xi\in\R^n,
 $$
where $ \mathcal{F}u$ denotes the Fourier transform of $u$, $\mathcal{F}u(\xi)=\int_{\R^n} e^{-2\pi i x.\xi} u(x) dx$. See \cite{Hitchhikers guide} and references therein for the basics on the fractional Laplacian. 

Problems involving two non-linearities have been studied in the case of local operators such as the Laplacian $-\Delta$, the $p$-Laplacian $-\Delta_p$ and the Biharmonic operator $\Delta^2$ (See \cite{Bhakta},  \cite{Filippucci-Pucci-Robert}, \cite{Kang-Li} and \cite{Xuan-Wang}). Problem (\ref{Main problem}) above is the non-local counterpart of the one studied by Filippucci-Pucci-Robert in \cite{Filippucci-Pucci-Robert}, who treated the case of the $p$-Laplacian in an equation involving both the Sobolev and the Hardy-Sobolev critical exponents.

Questions of existence and non-existence of  solutions for fractional elliptic equations with singular potentials were recently studied by several authors. All studies focus, however, on problems with only one critical exponent --mostly the non-linearity $u^{2_{\alpha}^*-1} $-- and to a lesser extent the critical Hardy-Sobolev singular term ${\frac{u^{2_{\alpha}^*(s)-1}}{|x|^s}}$ (see \cite{Cotsiolis-Tavoularis}, \cite{Fall-Minlend-Thiam}, \cite{Yang} and the references therein). These cases were also studied on smooth bounded domains (see for example  \cite{B-C-D-S 2}, \cite{B-C-D-S 1}, \cite{Barrios-Medina-Peral}, \cite{Fall}, \cite{Servadei} and the references therein). In general, the case of two critical exponents involve more subtleties and difficulties, even for local differential operators.

The variational approach that we adopt here, relies on the  following fractional Hardy-Sobolev type inequality:
\begin{equation} 
C(\int_{\mathbb{R}^n} \frac{|u|^{2_{\alpha}^*(s)}}{|x|^{s}}dx)^\frac{2}{{2_{\alpha}^*(s)}} \leq \int_{\R^n} |({-}{ \Delta})^{\frac{\alpha}{4}}u|^2 dx  - \gamma \int_{\mathbb{R}^n} \frac{|u|^{2}}{|x|^{\alpha}}dx \quad \hbox{for all $u \in H^{\frac{\alpha}{2}}(\R^n)$},
\end{equation}
where $\gamma < \gamma_H:=2^\alpha \frac{\Gamma^2(\frac{n+\alpha}{4})}{\Gamma^2(\frac{n-\alpha}{4})}$  is the best fractional Hardy constant on $\R^n$. The fractional space $H^{\frac{\alpha}{2}}(\R^n)$ is defined as the completion of $C_0^{\infty}(\R^n)$ under the norm 
$$\|u\|_{H^{\frac{\alpha}{2}}(\mathbb{R}^n)}^2= \int_{\mathbb{R}^n}|2\pi \xi |^{\alpha} |\mathcal{F}u(\xi)|^2 d\xi =\int_{\mathbb{R}^n} |(-\Delta)^{\frac{\alpha}{4}}u|^2 dx.$$

The best constant in the above fractional Hardy-Sobolev inequality is defined as:

\begin{equation} \label{Problem: the best fractional Hardy-Sobolev type constant }
\mu_{\gamma,s}(\R^n):= \inf\limits_{u \in H^{\frac{\alpha}{2}} (\R^n)\setminus \{0\}} \frac{ \int_{\R^n} |({-}{ \Delta})^{\frac{\alpha}{4}}u|^2 dx - \gamma \int_{\R^n} \frac{|u|^2}{|x|^{\alpha}}dx }{(\int_{\R^n} \frac{|u|^{2_{\alpha}^*(s)}}{|x|^{s}}dx)^\frac{2}{2_{\alpha}^*(s)}}.
\end{equation}
One step towards addressing Problem (\ref{Main problem}) consists of proving the existence of extremals for $\mu_{\gamma,s}(\R^n),$ when $s \in [0,\alpha)$ and $ \gamma\in (-\infty, \gamma_H).$ Note that the Euler-Lagrange equation corresponding to the minimization problem for $\mu_{\gamma,s}(\R^n)$ is --up to a constant factor-- the following:
 \begin{equation}\label{one}
\left\{\begin{array}{rl}
({-}{ \Delta})^{\frac{\alpha}{2}}u- \gamma \frac{u}{|x|^{\alpha}}= {\frac{u^{2_{\alpha}^*(s)-1}}{|x|^s}} & \text{in }  {\R^n}\\
 u>0 \,\,\,\,\,\,\,\,\,\,\,\,\,\,\,\,\,  & \text{in }  \mathbb{R}^n.
\end{array}\right.
\end{equation}
When $\alpha=2$, i.e., in the case of the standard Laplacian, the above minimization problem (\ref{Problem: the best fractional Hardy-Sobolev type constant }) has been extensively studied. See for example  \cite{Catrina-Wang}, \cite{Chern-Lin}, \cite{Filippucci-Pucci-Robert}, \cite{Ghoussoub-Moradifam}, \cite{Ghoussoub-Robert 2014} and \cite{Ghoussoub-Yuan}.

The non-local case has also been the subject of several studies, but in the absence of the Hardy term, i.e., when $\gamma=0$. 
In \cite{Fall-Minlend-Thiam}, Fall, Minlend and Thiam proved the existence of extremals for $\mu_{0,s}(\R^n)$ in the case $\alpha=1.$ Recently, J. Yang in \cite{Yang} proved that there exists a positive, radially symmetric and non-increasing extremal for $\mu_{0,s}(\R^n)$ when $\alpha \in (0,2).$ Asymptotic properties of the positive solutions were  given by  Y. Lei  \cite{Lei}, Lu and Zhu \cite{Lu-Zhu}, and  Yang and  Yu \cite{Yang-Yu}. 

In section 3, we consider the remaining cases in the problem of deciding  whether the best constant in the fractional Hardy-Sobolev inequality  $\mu_{\gamma,s}(\R^n)$ is attained.
 We use Ekeland's variational principle to show the following.

\begin{theorem} \label{Theorem the best fractional H-S constan}  
Suppose $0<\alpha<2$, $ 0 \le s < \alpha<n$, and $\gamma < \gamma_H := 
2^\alpha \frac{\Gamma^2(\frac{n+\alpha}{4})}{\Gamma^2(\frac{n-\alpha}{4})}$. 
\begin{enumerate}
\item If either $ \{ s > 0 \} \text{ or } \{ s=0 \text{ and } \gamma \ge 0 \}$, then $\mu_{\gamma,s}(\R^n)$ is attained.

\item If $s=0$ and $\gamma < 0$, then there are no extremals for $\mu_{\gamma,s}(\R^n).$

\item If either $\{0 < \gamma < \gamma_H  \} \text{ or } \{ 0<s<\alpha \text{ and } \gamma=0 \},$
 then any non-negative minimizer for $\mu_{\gamma,s}(\R^n)$ is positive, radially symmetric, radially decreasing,  
 and approaches zero as ${|x| \to \infty}.$
 \end{enumerate} 
\end{theorem}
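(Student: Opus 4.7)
The plan is to consider a minimizing sequence $(u_k) \subset H^{\alpha/2}(\R^n)$ for the Rayleigh quotient (\ref{Problem: the best fractional Hardy-Sobolev type constant }), normalized so that $\int_{\R^n} |u_k|^{2_{\alpha}^*(s)}/|x|^s\, dx = 1$. The fractional Hardy inequality together with the hypothesis $\gamma < \gamma_H$ yields
\[
\int_{\R^n} |(-\Delta)^{\alpha/4} u_k|^2\, dx - \gamma \int_{\R^n} \frac{u_k^2}{|x|^\alpha}\, dx \;\geq\; \Big(1 - \tfrac{\gamma^+}{\gamma_H}\Big)\, \|u_k\|_{H^{\alpha/2}(\R^n)}^2,
\]
so $(u_k)$ is bounded in $H^{\alpha/2}(\R^n)$. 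Applying Ekeland's variational principle on the constraint manifold $M = \{u : \int_{\R^n} |u|^{2_{\alpha}^*(s)}/|x|^s\,dx = 1\}$ upgrades $(u_k)$ to a Palais--Smale sequence for the associated Lagrangian at level $\mu_{\gamma,s}(\R^n)$, so any nontrivial weak limit $u$ automatically solves (up to a constant factor) the Euler-Lagrange equation (\ref{one}) and is therefore an extremal.

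For part (1), the scale invariance $u \mapsto t^{(n-\alpha)/2} u(t\cdot)$ of the quotient is the source of non-compactness: $(u_k)$ may concentrate at the origin or disperse to infinity. When $\gamma \geq 0$, I would first pass to the Schwarz symmetrization $u_k^\ast$: the fractional P\'olya--Szeg\H{o} inequality decreases $\int |(-\Delta)^{\alpha/4} u|^2$, while Hardy--Littlewood increases $\int u^2/|x|^\alpha$ (helping the numerator when $\gamma \geq 0$) and, for $s>0$, increases $\int |u|^{2_{\alpha}^*(s)}/|x|^s$ in the denominator, so the ratio only improves. The pointwise bound $u(x) \leq C|x|^{-(n-\alpha)/2}\|u\|_{H^{\alpha/2}}$ for radial decreasing functions, together with a scale normalization ensuring $\int_{B_1 \setminus B_{1/2}} |u_k|^{2_{\alpha}^*(s)}/|x|^s\,dx \geq c_0 > 0$, then pins the concentration to a fixed annulus, forcing $u \not\equiv 0$ by local compactness of the Sobolev embedding. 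The delicate case is $s>0,\,\gamma<0$, in which rearrangement of the numerator is no longer monotone and constitutes the main technical obstacle: here a concentration-compactness profile decomposition is called for, with the positive Hardy penalty $-\gamma\int u^2/|x|^\alpha>0$ blocking concentration at the origin, the singular denominator weight $|x|^{-s}$ blocking escape to infinity, and a strict inequality $\mu_{\gamma,s}(\R^n) < \mu_{0,s}(\R^n)$ obtained by testing with Yang's extremal for $\mu_{0,s}(\R^n)$ ruling out the remaining translation-free bubble.

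For part (2), assume by contradiction that $u$ is an extremal for $\mu_{\gamma,0}(\R^n)$ with $\gamma<0$. Translating the Aubin-Talenti extremal $U$ for $\mu_{0,0}(\R^n)$ far from the origin leaves the Sobolev quotient unchanged while driving $\int U(\cdot - R\,e_1)^2/|x|^\alpha\,dx \to 0$, so $\mu_{\gamma,0}(\R^n) \leq \mu_{0,0}(\R^n)$. On the other hand $-\gamma\int u^2/|x|^\alpha > 0$ for any $u \not\equiv 0$, whence
\[
\mu_{\gamma,0}(\R^n) \;=\; \frac{\int |(-\Delta)^{\alpha/4} u|^2 - \gamma\int u^2/|x|^\alpha}{(\int |u|^{2_{\alpha}^*})^{2/2_{\alpha}^*}} \;>\; \frac{\int |(-\Delta)^{\alpha/4} u|^2}{(\int |u|^{2_{\alpha}^*})^{2/2_{\alpha}^*}} \;\geq\; \mu_{0,0}(\R^n),
\]
contradicting the first inequality.

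For part (3), the rearrangement calculation from part (1) shows that $u^\ast$ is also an extremal with the same Rayleigh value as $u$; under the stated sign conditions ($0 < \gamma < \gamma_H$, or $\gamma = 0$ with $s>0$), strict inequality in either the fractional P\'olya--Szeg\H{o} inequality or the Hardy--Littlewood inequality would strictly lower the quotient, so $u$ must be a translate of $u^\ast$, and the Hardy weight $|x|^{-\alpha}$ or $|x|^{-s}$ anchored at the origin rules out nontrivial translations, giving $u = u^\ast$. Positivity then follows from the strong maximum principle for $(-\Delta)^{\alpha/2}$ applied to (\ref{one}), and the decay $u(x)\to 0$ as $|x|\to\infty$ follows from the radial-decreasing pointwise bound $u(x) \leq C|x|^{-(n-\alpha)/2}$.
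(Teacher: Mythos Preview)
Your treatments of parts~(2) and~(3) match the paper's, and for part~(1) with $\gamma\ge 0$ your symmetrization-plus-radial-compactness route is a legitimate alternative to the paper's Ekeland/L\'evy-concentration argument (the paper does not symmetrize for existence, only for part~(3)). However, your handling of the case $s>0$, $\gamma<0$ contains a genuine error.

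You claim a strict inequality $\mu_{\gamma,s}(\R^n) < \mu_{0,s}(\R^n)$ ``obtained by testing with Yang's extremal for $\mu_{0,s}(\R^n)$''. This inequality goes the wrong way: when $\gamma<0$ the numerator picks up the \emph{positive} term $-\gamma\int u^2/|x|^\alpha$, so evaluating the $\mu_{\gamma,s}$-quotient at Yang's extremal $w$ gives
\[
\mu_{0,s}(\R^n) \;+\; |\gamma|\,\frac{\int w^2/|x|^\alpha}{\bigl(\int |w|^{2_\alpha^*(s)}/|x|^s\bigr)^{2/2_\alpha^*(s)}} \;>\; \mu_{0,s}(\R^n),
\]
and in fact $\mu_{\gamma,s}(\R^n)\ge \mu_{0,s}(\R^n)$ for every $\gamma\le 0$. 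So the mechanism you propose for ``ruling out the remaining translation-free bubble'' collapses. Your other heuristics in that paragraph are also not sharp: the Hardy integral $\int u^2/|x|^\alpha$ is scale-invariant under $u\mapsto t^{(n-\alpha)/2}u(t\,\cdot)$, so the ``positive Hardy penalty'' does not in fact block concentration at the origin.

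The paper avoids this difficulty altogether by treating all $\gamma<\gamma_H$ uniformly when $s>0$. After an Ekeland upgrade and a L\'evy-concentration rescaling that pins $\int_{B_1}|v_k|^{2_\alpha^*(s)}/|x|^s\,dx=\tfrac12$, one tests the Palais--Smale relation against $\eta^2 v_k$ with a cutoff $\eta$ supported near the origin. If the weak limit were zero, this yields
\[
\|\eta v_k\|^2 \;\le\; \frac{S(n,\alpha,\gamma,s)}{2^{\,1-2/2_\alpha^*(s)}}\,
\Bigl(\int_{\R^n}\frac{|\eta v_k(\cdot,0)|^{2_\alpha^*(s)}}{|x|^s}\,dx\Bigr)^{2/2_\alpha^*(s)} + o(1),
\]
and the point is that $2^{\,1-2/2_\alpha^*(s)}>1$ precisely because $s>0$ forces $2_\alpha^*(s)>2$. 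Comparing with the definition of $S(n,\alpha,\gamma,s)$ then drives $\int_{B_1}|v_k|^{2_\alpha^*(s)}/|x|^s\,dx\to 0$, contradicting the normalization $\tfrac12$. No comparison with $\mu_{0,s}$ and no sign condition on $\gamma$ is needed. This is the missing idea in your sketch.
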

 In section 4, we consider problem  (\ref{Main problem}) and use the mountain pass lemma to establish the following result.
 
  \begin{theorem}\label{Theorem Main result}
Let $0<\alpha<2,$ 
$ 0 < s < \alpha<n$ and $ 0\le \gamma < \gamma_H.$  Then, there exists a nontrivial  weak solution of (\ref{Main problem}).
\end{theorem}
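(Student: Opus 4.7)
The plan is to apply the mountain-pass lemma to the $C^1$ functional
$$ I(u) = \tfrac12 Q_\gamma(u) - \tfrac{1}{2_{\alpha}^*}\int_{\R^n}|u|^{2_{\alpha}^*}\,dx - \tfrac{1}{2_{\alpha}^*(s)}\int_{\R^n}\frac{|u|^{2_{\alpha}^*(s)}}{|x|^s}\,dx $$
on $H^{\frac{\alpha}{2}}(\R^n)$, where $Q_\gamma(u):=\int_{\R^n}|(-\Delta)^{\alpha/4}u|^2\,dx - \gamma \int_{\R^n} u^2/|x|^\alpha\,dx$. Since $\gamma<\gamma_H$, the fractional Hardy inequality makes $Q_\gamma$ an equivalent norm on $H^{\alpha/2}(\R^n)$, and the two nonlinear terms are strictly super-quadratic, so the fractional Sobolev and Hardy-Sobolev inequalities give $I(u)\ge \beta>0$ on a small sphere $\|u\|=\rho$; for any fixed $v\neq 0$, $I(tv)\to -\infty$ as $t\to\infty$. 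Standard mountain-pass geometry then yields a Palais-Smale sequence at the min-max level $c = \inf_{\eta\in\Gamma}\max_{t\in[0,1]}I(\eta(t))\ge\beta$, with $\Gamma$ the usual class of paths joining $0$ to a point of negative energy. Replacing the nonlinearities by their truncations at $u^+$ and using $Q_\gamma(u^-)\ge 0$ together with the strong maximum principle for $(-\Delta)^{\alpha/2}$ will ensure any nontrivial critical point is strictly positive.

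The central difficulty is the loss of compactness caused by the two critical invariances. A Br\'ezis-Lieb / concentration-compactness analysis in the fractional setting shows that PS sequences for $I$ at level $c$ converge (up to a subsequence) to a nonzero weak solution of (\ref{Main problem}) as long as
$$ c < c^* := \min\Bigl\{\,\tfrac{\alpha}{2n}\,\mu_{0,0}(\R^n)^{n/\alpha},\ \tfrac{\alpha-s}{2(n-s)}\,\mu_{\gamma,s}(\R^n)^{(n-s)/(\alpha-s)}\Bigr\}, $$
the two thresholds being the energy of a Sobolev bubble concentrating at any point of $\R^n$ and that of a Hardy-Sobolev profile anchored at the origin, respectively. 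Proving Theorem \ref{Theorem Main result} therefore reduces to the strict bound $c<c^*$, which is the heart of the matter.

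To establish this bound I would use the positive radial extremal $V$ of $\mu_{\gamma,s}(\R^n)$ produced by Theorem \ref{Theorem the best fractional H-S constan} (which applies because $0<s<\alpha$ and $0\le\gamma<\gamma_H$), normalised by $\int V^{2_{\alpha}^*(s)}/|x|^s\,dx=1$ so that $Q_\gamma(V)=\mu_{\gamma,s}$. Testing along the ray $t\mapsto tV$ gives
$$ c\le \max_{t\ge 0}\Bigl(\tfrac{t^2}{2}\mu_{\gamma,s} - \tfrac{b}{2_{\alpha}^*}\,t^{2_{\alpha}^*} - \tfrac{1}{2_{\alpha}^*(s)}\,t^{2_{\alpha}^*(s)}\Bigr),\qquad b:=\int_{\R^n}V^{2_{\alpha}^*}\,dx>0, $$
and since the right-hand side is strictly decreasing in $b$ (envelope argument), it lies strictly below $\tfrac{\alpha-s}{2(n-s)}\mu_{\gamma,s}^{(n-s)/(\alpha-s)}$, settling the second component of $c^*$. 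The subtler comparison with the Sobolev threshold I would handle either by exploiting the known pointwise decay of $V$ from \cite{Yang} to estimate $b$, or by running a parallel mountain pass along a translated Sobolev bubble $U_{\epsilon,x_0}(x)=\epsilon^{-(n-\alpha)/2}U((x-x_0)/\epsilon)$ with $x_0\neq 0$: the Hardy and Hardy-Sobolev contributions are then $O(\epsilon^\alpha)$ and $O(\epsilon^s)$, and because $\gamma\ge 0$ and $s>0$ they suffice to push $\max_t I(tU_{\epsilon,x_0})$ strictly below $\tfrac{\alpha}{2n}\mu_{0,0}^{n/\alpha}$ for $\epsilon$ small. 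Choosing the better of the two test paths gives $c<c^*$, and the compactness analysis then delivers the nontrivial weak solution. The main obstacle throughout is precisely this strict energy estimate, because both critical invariances must be defeated simultaneously by a single admissible path.
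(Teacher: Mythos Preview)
Your overall architecture (mountain-pass geometry, a critical energy threshold, concentration-compactness to recover a nontrivial critical point) matches the paper's, and your treatment of the second threshold via the extremal $V$ of $\mu_{\gamma,s}$ is exactly what the paper does. The gap is in the \emph{first} component of your threshold $c^*$.

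You take the Sobolev threshold to be $\tfrac{\alpha}{2n}\mu_{0,0}(\R^n)^{n/\alpha}$, reasoning that the only pure-Sobolev loss of compactness is a bubble concentrating at some $x_0\in\R^n$. But the Hardy potential is anchored at the origin, so a sequence concentrating at $0$ sees the full operator $(-\Delta)^{\alpha/2}-\gamma|x|^{-\alpha}$, not just $(-\Delta)^{\alpha/2}$. Concretely, if $(u_k)$ is a Palais--Smale sequence with $u_k\rightharpoonup 0$ and one sets $\theta,\zeta,\mu$ for the mass of $|u_k|^{2^*_\alpha}$, $|u_k|^{2^*_\alpha(s)}/|x|^s$, and $Q_\gamma$ in a small ball around $0$, the inequality one obtains is
\[
\theta^{2/2^*_\alpha}\le \mu_{\gamma,0}(\R^n)^{-1}\,\mu,\qquad \mu\le \theta+\zeta,
\]
with $\mu_{\gamma,0}$, not $\mu_{0,0}$, because the Hardy term does \emph{not} vanish near $0$. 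To make the coefficient $1-\mu_{\gamma,0}^{-1}\theta^{(2^*_\alpha-2)/2^*_\alpha}$ positive and force the dichotomy $\{\theta=\zeta=0\}$ vs.\ $\{\theta,\zeta\ge\varepsilon_0\}$, one needs
\[
c<\tfrac{\alpha}{2n}\,\mu_{\gamma,0}(\R^n)^{n/\alpha}.
\]
When $\gamma>0$ one has $\mu_{\gamma,0}<\mu_{0,0}$, so your threshold is strictly larger than the one that actually governs compactness; proving $c<c^*$ in your sense does not suffice. The translated-Sobolev-bubble computation you sketch (and the decay-of-$V$ alternative) would at best yield $c<\tfrac{\alpha}{2n}\mu_{0,0}^{n/\alpha}$, which misses the target.

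The fix is both simpler and already available from Theorem~\ref{Theorem the best fractional H-S constan}: since $0\le\gamma<\gamma_H$ and $s=0$, part~(1) guarantees an extremal $W$ for $\mu_{\gamma,0}(\R^n)$. Testing the mountain-pass functional along $t\mapsto tW$ gives, exactly as in your argument for $V$,
\[
c\le\max_{t\ge0}\Bigl(\tfrac{t^2}{2}\mu_{\gamma,0}-\tfrac{t^{2^*_\alpha}}{2^*_\alpha}-\tfrac{t^{2^*_\alpha(s)}}{2^*_\alpha(s)}\int\frac{W^{2^*_\alpha(s)}}{|x|^s}\Bigr)<\tfrac{\alpha}{2n}\,\mu_{\gamma,0}^{n/\alpha},
\]
the strict inequality coming from the extra Hardy--Sobolev term. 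This is precisely the paper's route: it uses \emph{two} extremals (one for $\mu_{\gamma,0}$, one for $\mu_{\gamma,s}$), each furnishing one of the two strict bounds, with no asymptotic bubble analysis required.
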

Recall that $u \in H^{\frac{\alpha}{2}}(\R^n)$ is a weak solution of (\ref{Main problem}), if we have for all $\varphi \in H^{\frac{\alpha}{2}}(\R^n),$
$$\int_{\R^n}({-}{ \Delta})^{\frac{\alpha}{4}}u ({-}{ \Delta})^{\frac{\alpha}{4}} \varphi  dx = \int_{\R^n} \gamma \frac{u}{|x|^{\alpha}} \varphi dx + \int_{\R^n}|u|^{2_{\alpha}^*-2} u \varphi dx + \int_{\R^n} {\frac{|u|^{2_{\alpha}^*(s)-2}}{|x|^s}} u \varphi dx.
$$
The standard strategy to construct weak solutions of (\ref{Main problem}) is to find critical points of the corresponding functional on $H^{\frac{\alpha}{2}}(\R^n)$. 
However, (\ref{Main problem}) is invariant under the following conformal one parameter transformation group, 
\begin{equation} \label{the conformal invariance property of Main problem}
\hbox{$ T_r: H^{\frac{\alpha}{2}}(\R^n) \rightarrow H^{\frac{\alpha}{2}}(\R^n); \qquad  u(x) \rightarrow T_r[u](x)= r^{\frac{n-\alpha}{2}} u(rx), \quad r>0,$}
\end{equation}
which means that the convergence of Palais-Smale sequences is not a given. 
As it was argued in \cite{Filippucci-Pucci-Robert}, there is an asymptotic competition between the energy carried by the two critical nonlinearities. Hence, the crucial step here is to balance the competition 
 to avoid the domination of one term over another. Otherwise, there is vanishing of the weakest one, leading to a solution for the same equation but with only one critical nonlinearity. 
 In order to deal with this issue, we choose a suitable minimax energy level, in such a way that 
 after a careful analysis of the concentration phenomena,  we could eliminate the possibility of a vanishing weak limit for these well chosen Palais-Smale sequences, while ensuring 
that none of the two nonlinearities  dominate the other.

\section{Preliminaries and a description of the functional setting}

We start by recalling and  introducing suitable function spaces for the variational principles that will be needed in the sequel. We first recall the following useful representation given in \cite{B-C-D-S 2} and \cite{B-C-D-S 1}
for the fractional Laplacian $(-\Delta)^{\frac{\alpha}{2}}$ as a trace class operator, as well as for the space  $H^{\frac{\alpha}{2}}(\R^n)$.  

For a function $u \in H^{\frac{\alpha}{2}}(\R^n)$, let  $w=E_{\alpha}(u)$ be its $\alpha$-harmonic extension to the upper half-space, $\R_+^{n+1}$, that is the solution to the following problem:
 \begin{equation*}
\left\{\begin{array}{rl}
{\rm div}\,(y^{1-{\alpha}} \nabla w)=0 & \text{in } \mathbb{R}_+^{n+1} \\
w= u & \text{on }   \mathbb{R}^n \times \{y=0\}. 
\end{array}\right .
\end{equation*}
Define the space  $X^{\alpha} (\R_+^{n+1})$ as the closure of  $
C_0^{\infty}(\overline{\mathbb{R}_+^{n+1})}$ for the norm 
$$\| w\|_{X^{\alpha}({\mathbb{R}_+^{n+1}})}:=\left( k_{\alpha} \int_{\mathbb{R}_+^{n+1}} y^{1-\alpha} | \nabla w |^2 dxdy \right)^\frac{1}{2},$$
 where $k_\alpha=\frac{\Gamma(\frac{\alpha}{2})}{2^{1-\alpha}\Gamma(1-{\frac{\alpha}{2}})}$ is a normalization constant chosen in such a way that the extension operator $E_\alpha (u): {H^{\frac{\alpha}{2}}(\R^n) \to X^{\alpha} (\R_+^{n+1})}$  is an isometry, that is, for any $ u \in H^{\frac{\alpha}{2}}(\R^n),$ we have
\begin{equation} \label{extension norm}
\|E_{\alpha}(u)\|_{X^{\alpha} (\R^{n+1}_+)} = \|u \|_{H^{\frac{\alpha}{2}}(\R^n)}=\| (-\Delta)^{\frac{\alpha}{4}} u \|_{L^2(\R^n)}.
\end{equation}

Conversely,  for a function $w \in X^{\alpha} (\R_+^{n+1}),$ we denote its trace on $\R^n \times  \{y = 0\}$ as Tr$(w):=w(.,0)$. This trace operator is also well defined and satisfies

\begin{equation}\label{trace inequality between extension norm and fractional sobolev}
\|w(.,0) \|_{H^{\frac{\alpha}{2}}(\R^n)} \le \|w\|_{X^{\alpha} (\R_+^{n+1})}. 
\end{equation} 
We shall frequently use the following useful fact:  
Since $\alpha \in (0, 2)$, the weight $y^{1-\alpha}$ belongs to the Muckenhoupt class $A
_2$; \cite{Muckenhoupt}, which consists of all non-negative functions $w$ on $\R^n$ satisfying   for some constant C, the estimate 
 \begin{equation}
 \sup\limits_B (\frac{1}{|B|}\int_B w dx)(\frac{1}{|B|}\int_B w^{-1} dx) \le C,
 \end{equation} 
where the supremum is taken over all balls $B$ in $\R^n.$

If $\Omega \subset \R^{n+1}$ is an open domain, we denote by $L^2(\Omega, |y|^{1-\alpha})$ the space
of all measurable functions  on $\Omega$ such that $\|w\|^2_{L^2(\Omega , |y|^{1-\alpha})} = \int_{\Omega} |y|^{1-\alpha} |w|^2 dxdy < \infty$, and by $H^1(\Omega, |y|^{1-\alpha})$ the weighted Sobolev space $$H^1(\Omega, |y|^{1-\alpha}) = \left\{ w \in L^2(\Omega, |y|^{1-\alpha}): \, \nabla w \in  L^2(\Omega, |y|^{1-\alpha}) \right\}.$$ 
It is remarkable that most of the properties of classical Sobolev spaces, including the embedding theorems  have a weighted counterpart as long as the weight is in the Muckenhoupt class $A_2$ see \cite{Fabes-Kenig-Serapioni} and \cite{Gol'dshtein-Ukhlov}. 

 Note 
 that $H^1(\R^{n+1}_+ , y^{1-\alpha})$ -- up to a normalization factor--  is  also isometric to $X^{\alpha}(\R^{n+1}_+).$  In \cite{Caffarelli-Silvestre}, Caffarelli and Silvestre showed that the extension function $E_{\alpha}(u)$ is related to the fractional Laplacian of the original function $u$ in the following way:

\begin{equation*}
(-\Delta)^{\frac{\alpha}{2}}u(x)=\frac{\partial w}{\partial \nu^{\alpha}}:= - k_{\alpha} \lim\limits_{y \to 0^+} y^{1-\alpha} \frac{\partial w}{\partial y}(x,y).
\end{equation*}
With this representation, the non-local problem (\ref{Main problem}) can then be written as the following local problem:

\begin{equation} \label{Main problem.prime} 
\left\{\begin{array}{rll}
 - {\rm div}\,(y^{1-\alpha}\nabla  w)=0 \hfill  & \text{in} \ \mathbb{R}^{n+1}_+ \\
\frac{\partial w}{\partial \nu^{\alpha}}= \gamma \frac{w(.,0)}{|x|^{\alpha}} +w(.,0)^{2^*_\alpha-1}+ \frac{w(.,0)^{{2_{\alpha}^*(s)}-1}}{|x|^s}&\text{on } \ \mathbb{R}^n.
\end{array}\right.
\end{equation}
A function $w \in X^{\alpha}(\R_+^{n+1}) $ is said to be a weak solution to (\ref{Main problem.prime}), if for all $\varphi \in X^{\alpha}(\R_+^{n+1}),$ 
\begin{eqnarray*}
  k_{\alpha} \int_{\mathbb{R}_+^{n+1}} y^{1-\alpha} \langle \nabla w, \nabla \varphi \rangle  dxdy& =& \int_{\R^n} \gamma \frac{w(x,0)}{|x|^{\alpha}} \varphi dx  + \int_{\R^n} |w(x,0)|^{2^*_\alpha-2} w(x,0)\varphi dx\\
  &&+ \int_{\R^n} \frac{|w(x,0)|^{{2_{\alpha}^*(s)}-2}w(x,0)}{|x|^s} \varphi dx.
  \end{eqnarray*}

Note that for any weak solution $w$ in $X^{\alpha}(\R_+^{n+1})$ to  (\ref{Main problem.prime}), the function $u=w(.,0) $  defined in the sense of traces, is in $H^{\frac{\alpha}{2}}(\R^n)$ and is a weak solution to problem (\ref{Main problem}). 

The energy functional corresponding to (\ref{Main problem.prime}) is 

\begin{equation*}
\Phi(w)= \frac{1}{2} \| w\|^2_{X^{\alpha} (\R_+^{n+1})} - \frac{\gamma}{2}\int_{\mathbb{R}^n}\frac{|w(x, 0)|^{2}}{|x|^{\alpha}} dx -\frac{1}{2_{\alpha}^*} \int_{\R^n} |w(x, 0)|^{2_{\alpha}^*}\, dx -\frac{1}{2_{\alpha}^*(s)}\int_{\R^n} \frac{|w(x, 0)|^{2_{\alpha}^*(s)}}{|x|^{s}}  dx.
\end{equation*}

Hence the associated trace of any critical point $w$ of $\Phi$  in $X^{\alpha}(\R_+^{n+1})$  is a weak solution  for  (\ref{Main problem}).

The starting point of the  study of existence of weak solutions of the above problems  is therefore the following fractional trace inequalities which will guarantee that the above functionals are well defined and bounded below on the right function spaces. 
We start with the fractional Sobolev inequality \cite{Cotsiolis-Tavoularis}, which asserts that for $n > \alpha$ and $ 0<\alpha<2$, there exists a constant $C(n,\alpha) >0 $ such that 
\begin{equation}\label{fractional sobolev inequality}
\hbox{$ ( \int_{\R^n} |u|^{2_\alpha^*} dx )^{\frac{2}{2_\alpha^*}} \leq C(n,\alpha) \int_{\R^n} |({-}{ \Delta})^{\frac{\alpha}{4}}u|^2 dx$ \quad for all $u \in H^{\frac{\alpha}{2}} (\R^n),$
}
\end{equation}
where $2_{\alpha}^* = \frac{2n}{n-\alpha}.$ Another important inequality  is the fractional  Hardy inequality (see \cite{Frank-Lieb-Seiringer} and \cite{Herbst}), which states that under the same conditions on $n$ and $\alpha$, we have 
\begin{equation}\label{fractional Hardy inequality}
\hbox{$ \gamma_H \int_{\R^n}\frac{|u|^2}{|x|^{\alpha}} dx \leq \int_{\R^n} |({-}{ \Delta})^{\frac{\alpha}{4}}u|^2 dx$ \quad for all $u \in H^{\frac{\alpha}{2}} (\mathbb{R}^n)$,}
\end{equation}
where $\gamma_H$
is the best constant in the above inequality on $\R^n,$ that is
\begin{equation} 
\gamma_H=\gamma_H(\alpha):=\inf\left\{\frac{ \int_{\R^n} |({-}{ \Delta})^{\frac{\alpha}{4}}u|^2 dx}{ \int_{\R^n}\frac{|u|^2}{|x|^{\alpha}} dx}; \,\,  u \in H^{\frac{\alpha}{2}} (\R^n) \setminus \{0\}\right\}.
\end{equation}
It has also been shown there that $\gamma_H (\alpha)= 2^\alpha \frac{\Gamma^2(\frac{n+\alpha}{4})}{\Gamma^2(\frac{n-\alpha}{4})}$. 
Note that $\gamma_H(\alpha)$ converges to the best classical Hardy constant $\gamma_H(2)=\frac{(n-2)^2}{4}$ when ${\alpha \to 2}$. 

By interpolating these inequalities via H\"older's inequalities, one gets the following fractional Hardy-Sobolev inequalities.  

\begin{lemma}[Fractional Hardy-Sobolev Inequalities] Assume that $0<\alpha<2$,   and $ 0 \le s \le \alpha <n$. Then, there exist positive constants $ c \text{ and }  C,$ such that  
\begin{equation} \label{Fractional H-S inequality}
(\int_{\mathbb{R}^n} \frac{|u|^{2_{\alpha}^*(s)}}{|x|^{s}}dx)^\frac{2}{{2_{\alpha}^*(s)}} \leq c \int_{\R^n} |({-}{ \Delta})^{\frac{\alpha}{4}}u|^2 dx \quad \hbox{for all $u \in H^{\frac{\alpha}{2}} (\mathbb{R}^n).$} 
\end{equation}
Moreover, if $\gamma < \gamma_H:=2^\alpha \frac{\Gamma^2(\frac{n+\alpha}{4})}{\Gamma^2(\frac{n-\alpha}{4})}$, then 
\begin{equation} \label{fractional H-S-M inequality}
C(\int_{\mathbb{R}^n} \frac{|u|^{2_{\alpha}^*(s)}}{|x|^{s}}dx)^\frac{2}{{2_{\alpha}^*(s)}} \leq \int_{\R^n} |({-}{ \Delta})^{\frac{\alpha}{4}}u|^2 dx  - \gamma \int_{\mathbb{R}^n} \frac{|u|^{2}}{|x|^{\alpha}}dx \quad \hbox{for all $u \in H^{\frac{\alpha}{2}} (\mathbb{R}^n).$} 
\end{equation}
 
 \end{lemma}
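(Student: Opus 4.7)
The plan is to derive both inequalities by interpolating between the fractional Sobolev inequality (\ref{fractional sobolev inequality}) and the fractional Hardy inequality (\ref{fractional Hardy inequality}) via H\"older's inequality, then handling the Hardy correction term by a simple case split on the sign of $\gamma$. The endpoint cases $s=0$ and $s=\alpha$ are exactly (\ref{fractional sobolev inequality}) and (\ref{fractional Hardy inequality}) respectively, so I would focus on $0<s<\alpha$.

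For inequality (\ref{Fractional H-S inequality}), the key observation is to find exponents $\theta\in(0,1)$ so that the weighted integrand $|u|^{2_\alpha^*(s)}/|x|^s$ factors as
\[
\frac{|u|^{2_\alpha^*(s)}}{|x|^s}=\Bigl(\frac{|u|^2}{|x|^\alpha}\Bigr)^{\!\theta}\cdot |u|^{2_\alpha^*(s)-2\theta},
\]
with the exponent on $|x|$ matching $s=\alpha\theta$. This forces $\theta=s/\alpha$. A direct algebraic check then gives $2_\alpha^*(s)-2\theta=\frac{2n(\alpha-s)}{\alpha(n-\alpha)}$, and one verifies that $(2_\alpha^*(s)-2\theta)\cdot\frac{\alpha}{\alpha-s}=2_\alpha^*$, so applying H\"older with conjugate exponents $p=\alpha/s$ and $q=\alpha/(\alpha-s)$ yields
\[
\int_{\R^n}\frac{|u|^{2_\alpha^*(s)}}{|x|^s}\,dx\ \le\ \Bigl(\int_{\R^n}\frac{|u|^2}{|x|^\alpha}\,dx\Bigr)^{\!s/\alpha}\Bigl(\int_{\R^n}|u|^{2_\alpha^*}\,dx\Bigr)^{\!(\alpha-s)/\alpha}.
\]
Then I would plug in (\ref{fractional Hardy inequality}) for the first factor and (\ref{fractional sobolev inequality}) for the second. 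The exponents combine to exactly $2_\alpha^*(s)/2$ on $\|u\|_{H^{\alpha/2}}^2$ (a one-line check, using $\frac{2s}{\alpha}+\frac{2_\alpha^*(\alpha-s)}{\alpha}=2_\alpha^*(s)$), and raising both sides to the $2/2_\alpha^*(s)$ power yields (\ref{Fractional H-S inequality}).

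For (\ref{fractional H-S-M inequality}) I would split on the sign of $\gamma$. If $\gamma\le 0$, then $-\gamma\int|u|^2|x|^{-\alpha}\,dx\ge 0$, so the right-hand side of (\ref{fractional H-S-M inequality}) dominates $\|u\|_{H^{\alpha/2}}^2$ and (\ref{Fractional H-S inequality}) applies directly. If $0<\gamma<\gamma_H$, then (\ref{fractional Hardy inequality}) gives
\[
\int_{\R^n}|(-\Delta)^{\alpha/4}u|^2\,dx-\gamma\int_{\R^n}\frac{|u|^2}{|x|^\alpha}\,dx\ \ge\ \Bigl(1-\frac{\gamma}{\gamma_H}\Bigr)\int_{\R^n}|(-\Delta)^{\alpha/4}u|^2\,dx,
\]
and combining this with (\ref{Fractional H-S inequality}) gives (\ref{fractional H-S-M inequality}) with constant $C=(1-\gamma/\gamma_H)/c$.

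I do not anticipate a real obstacle here: the whole argument is a H\"older interpolation between two inequalities that have already been recorded in the paper, plus a trivial spectral-gap argument for the Hardy correction. The only subtle point worth spelling out carefully is the arithmetic confirming that the H\"older exponents produced by the choice $\theta=s/\alpha$ align precisely with $2$, $\alpha$ (for Hardy) and $2_\alpha^*$ (for Sobolev), and that the resulting power of $\|u\|_{H^{\alpha/2}}$ agrees with $2_\alpha^*(s)$; this is the step one must execute without error.
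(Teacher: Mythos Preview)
Your proposal is correct and follows essentially the same route as the paper: the same H\"older split with $\theta=s/\alpha$ to interpolate between the fractional Hardy and Sobolev inequalities for (\ref{Fractional H-S inequality}), and the same use of the Hardy inequality to extract a factor $(1-\gamma/\gamma_H)$ for (\ref{fractional H-S-M inequality}). Your explicit case split on the sign of $\gamma$ is a minor cosmetic difference; the paper writes the bound $(1-\gamma/\gamma_H)$ uniformly, which is fine for $\gamma\ge 0$ and unnecessary (though the conclusion is even easier) for $\gamma<0$.
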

 \begin{proof}
 Note that for s = 0 (resp., s = $\alpha$) the first inequality  is just the fractional Sobolev (resp., the fractional Hardy) inequality. We therefore have to only consider the case where $0 < s< \alpha$ in which case $2_{\alpha}^*(s)>2$. By applying H\"older's inequality, then the fractional Hardy and the fractional Sobolev inequalities, we have
\begin{align*}
\int_{\mathbb{R}^n} \frac{|u|^{2_{\alpha}^*(s)}}{|x|^{s}}dx &= \int_{\mathbb{R}^n} \frac{|u|^\frac{2s}{\alpha}}{|x|^{s}} |u|^{2_{\alpha}^*(s)-\frac{2s}{\alpha}}dx \\
& \le  (\int_{\mathbb{R}^n} \frac{ |u|^2}{|x|^{\alpha}}dx)^\frac{s}{\alpha} (\int_{\mathbb{R}^n}  |u|^{(2_{\alpha}^*(s) - \frac{2s}{\alpha}) \frac{\alpha}{\alpha-s} }dx)^\frac{\alpha-s}{\alpha} \\
&=  (\int_{\mathbb{R}^n} \frac{ |u|^2}{|x|^{\alpha}}dx)^\frac{s}{\alpha} (\int_{\mathbb{R}^n}  |u|^{2_{\alpha}^*}dx)^\frac{\alpha-s}{\alpha}  \\ 
&\le C_1 (\int_{\R^n} |({-}{ \Delta})^{\frac{\alpha}{4}}u|^2 dx)^\frac{s}{\alpha}  C_2 (\int_{\R^n} |({-}{ \Delta})^{\frac{\alpha}{4}}u|^2 dx)^{\frac{2_{\alpha}^*}{2}.\frac{\alpha-s}{\alpha}}\\
&\le c (\int_{\R^n} |({-}{ \Delta})^{\frac{\alpha}{4}}u|^2 dx)^\frac{n-s}{n-\alpha} = c (\int_{\R^n} |({-}{ \Delta})^{\frac{\alpha}{4}}u|^2 dx)^ \frac{2_{\alpha}^*(s)}{2}.
\end{align*}

From the definition of ${\gamma_H}$, it follows that for all $u \in H^{\frac{\alpha}{2}}(\R^n),$
$$\frac{\int_{\R^n} |({-}{ \Delta})^{\frac{\alpha}{4}}u|^2 dx-\gamma \int_{\mathbb{R}^n} \frac{|u|^{2}}{|x|^{\alpha}}dx}{ (\int_ {\mathbb{R}^n} \frac{|u|^  {2_{\alpha}^*(s)}}{|x|^s}dx)^\frac{2}{2_{\alpha}^*(s)} } \geq (1- \frac{\gamma}{\gamma_H}) \frac{ \int_{\R^n} |({-}{ \Delta})^{\frac{\alpha}{4}}u|^2 dx}{ (\int_ {\mathbb{R}^n} \frac{|u|^  {2_{\alpha}^*(s)}}{|x|^s}dx)^\frac{2}{2_{\alpha}^*(s)} }.$$
Hence (\ref{Fractional H-S inequality})  implies (\ref{fractional H-S-M inequality}) whenever $\gamma < \gamma_H.$

\end{proof}
\begin{remark}
One can use (\ref{extension norm}) to rewrite inequalities (\ref{fractional Hardy inequality}), (\ref{Fractional H-S inequality})  and  (\ref{fractional H-S-M inequality}) as the following trace class inequalities:

\begin{equation} \label{fractional Trace Hardy inequality}
\gamma_H \int_{\mathbb{R}^n} \frac{|w(x,0)|^{2}}{|x|^{\alpha}} \ dx \leq \ \| w\|^2_{X^{\alpha} (\R_+^{n+1})},
\end{equation}

\begin{equation} \label{fractional Trace H-S inequality}
(\int_{\mathbb{R}^n} \frac{|w(x,0)|^{2_{\alpha}^*(s)}}{|x|^{s}} \ dx)^\frac{2}{{2_{\alpha}^*(s)}} \leq c \ \| w\|^2_{X^{\alpha} (\R_+^{n+1})},
\end{equation}
\begin{equation} \label{fractional Trace H-S-M inequality }
C(\int_{\mathbb{R}^n} \frac{|w(x,0)|^{2_{\alpha}^*(s)}}{|x|^{s}} \ dx)^\frac{2}{{2_{\alpha}^*(s)}} \leq  \ \| w\|^2_{X^{\alpha} (\R_+^{n+1})} - \gamma\int_{\mathbb{R}^n}\frac{|w(x,0)|^{2}}{|x|^{\alpha}}dx. 
\end{equation}

\end{remark}

The best constant $\mu_{\gamma,s}(\R^n)$ in inequality (\ref{fractional H-S-M inequality}), can also be written as:

\begin{equation*}
S(n,\alpha,\gamma,s)= \inf\limits_{w \in X^{\alpha} (\R_+^{n+1})\setminus \{0\}} \frac{k_{\alpha}\int_{\R_+^{n+1}} y^{1-\alpha} |\nabla w|^2  dxdy - \gamma \int_{\R^n} \frac{|w(x,0)|^2}{|x|^{\alpha}} dx}{(\int_{\R^n} \frac{|w(x, 0)|^{2_{\alpha}^*(s)}}{|x|^{s}}dx)^\frac{2}{2_{\alpha}^*(s)}}.
\end{equation*}

We shall therefore investigate whether there exist extremal functions where this best constant is attained. Theorems \ref{Theorem the best fractional H-S constan} and \ref{Theorem Main result} can therefore be stated in the following way:

\begin{theorem} \label{Theorem Existence for fractional H-S-M, using the best constant}
Suppose  $0<\alpha<2$, $ 0 \le s < \alpha <n$,  
and $\gamma < \gamma_H $. 
We then have the following:
\begin{enumerate}
\item If $ \{ s > 0 \} \text{ or } \{ s=0 \text{ and } \gamma \ge 0 \}$, then $S(n,\alpha,\gamma,s)$ is attained in $X^{\alpha} (\R_+^{n+1})$. 

\item If $s=0$ and $\gamma < 0$, then there are no extremals for $S(n,\alpha,\gamma,s)$ in $X^{\alpha} (\R_+^{n+1})$.
 \end{enumerate} 
\end{theorem}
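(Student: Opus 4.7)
My plan is to combine Ekeland's variational principle with the dilation symmetry $T_r$ to produce a non-vanishing weak limit, then close the argument by a Brezis--Lieb decomposition. First I would pick a minimizing sequence $(w_k)\subset X^\alpha(\R^{n+1}_+)$ for $S(n,\alpha,\gamma,s)$ normalized by $\int_{\R^n} \frac{|w_k(x,0)|^{2_\alpha^*(s)}}{|x|^s}\,dx = 1$, and apply Ekeland's variational principle on the constraint manifold so that $(w_k)$ is also an approximate Palais--Smale sequence. The trace Hardy inequality (\ref{fractional Trace Hardy inequality}) combined with $\gamma < \gamma_H$ gives
\[
\Bigl(1 - \tfrac{\max(\gamma,0)}{\gamma_H}\Bigr)\|w_k\|^2_{X^\alpha(\R^{n+1}_+)} \le \|w_k\|^2_{X^\alpha(\R^{n+1}_+)} - \gamma\!\int_{\R^n}\!\frac{|w_k(x,0)|^{2}}{|x|^\alpha}\,dx \longrightarrow S(n,\alpha,\gamma,s),
\]
so $(w_k)$ is bounded in $X^\alpha(\R^{n+1}_+)$.

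When $s>0$, I would exploit the fact that $T_r$ preserves each term of the Rayleigh quotient: for each $k$ choose $r_k>0$ so that, after replacing $w_k$ by $T_{r_k}[w_k]$, $\int_{B_1(0)} \frac{|w_k(x,0)|^{2_\alpha^*(s)}}{|x|^s}\,dx = \tfrac12$. This pins down the only remaining invariance and forbids both total concentration at $0$ and complete escape to $\infty$. Passing to a weak limit $w$ in $X^\alpha(\R^{n+1}_+)$, one has $w_k(\cdot,0)\to w(\cdot,0)$ almost everywhere and in $L^p_{\rm loc}$ for every $p<2_\alpha^*$. A cut-off argument, combined with the subcritical Hardy--Sobolev bound on $B_1\setminus B_\varepsilon$ and a small-ball estimate on $B_\varepsilon$ absorbing the singular mass into $\|w_k\|^2_{X^\alpha}$ times a factor that vanishes with $\varepsilon$, forces $\int_{B_1} \frac{|w(x,0)|^{2_\alpha^*(s)}}{|x|^s}\,dx \ge \tfrac12$, whence $w\not\equiv 0$. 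The case $s=0$, $\gamma>0$ is analogous: here translations are an extra invariance that is ruled out as soon as one proves the strict inequality $S(n,\alpha,\gamma,0)<S(n,\alpha,0,0)$, obtained by testing on any Sobolev extremizer and observing that $\gamma\int |u|^2/|x|^\alpha>0$. The subcase $s=\gamma=0$ reduces to attainment of the fractional Sobolev constant, classical by \cite{Cotsiolis-Tavoularis}. Once $w\neq 0$ is secured, writing $w_k = w+v_k$ with $v_k\rightharpoonup 0$ and using Brezis--Lieb, the numerator and the weighted $L^{2_\alpha^*(s)}$ integral in the denominator split additively up to $o(1)$; the strict super-additivity $a^{2/2_\alpha^*(s)}+b^{2/2_\alpha^*(s)}>(a+b)^{2/2_\alpha^*(s)}$ for $a,b>0$ (valid since $2_\alpha^*(s)>2$ when $s<\alpha$) then forces $v_k\to 0$ strongly in $X^\alpha$, and $w$ is an extremal.

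\textbf{Part (2).} Assume $s=0$ and $\gamma<0$. Since $-\gamma\int |u|^2/|x|^\alpha \ge 0$, one has $S(n,\alpha,\gamma,0) \ge S(n,\alpha,0,0)$. For the reverse inequality, fix $u\in H^{\alpha/2}(\R^n)\setminus\{0\}$ and consider the translates $u_{x_0}(x):=u(x-x_0)$. By dominated convergence $\int |u_{x_0}|^2/|x|^\alpha \to 0$ as $|x_0|\to\infty$, while $\|(-\Delta)^{\alpha/4}u_{x_0}\|_2^2$ and $\|u_{x_0}\|_{2_\alpha^*}$ are translation invariant, so
\[
S(n,\alpha,\gamma,0) \le \lim_{|x_0|\to\infty}\frac{\|(-\Delta)^{\alpha/4} u\|_2^2 - \gamma\int |u_{x_0}|^2/|x|^\alpha}{\bigl(\int |u|^{2_\alpha^*}\bigr)^{2/2_\alpha^*}} = \frac{\|(-\Delta)^{\alpha/4} u\|_2^2}{\bigl(\int |u|^{2_\alpha^*}\bigr)^{2/2_\alpha^*}},
\]
and taking the infimum over $u$ gives $S(n,\alpha,\gamma,0)= S(n,\alpha,0,0)$. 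If $w\in X^\alpha(\R^{n+1}_+)$ were an extremal with trace $u=w(\cdot,0)\not\equiv 0$, then $\int |u|^2/|x|^\alpha>0$ and $-\gamma>0$ would give
\[
S(n,\alpha,0,0) = \frac{\|w\|^2_{X^\alpha} - \gamma \int |u|^2/|x|^\alpha}{\bigl(\int |u|^{2_\alpha^*}\bigr)^{2/2_\alpha^*}} > \frac{\|w\|^2_{X^\alpha}}{\bigl(\int |u|^{2_\alpha^*}\bigr)^{2/2_\alpha^*}} \ge S(n,\alpha,0,0),
\]
a contradiction; hence no extremal exists.

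The hard part I expect is the non-vanishing step in Part (1): ruling out a vanishing weak limit after the rescaling requires a careful analysis near the singular point $x=0$, because the weighted trace embedding $X^\alpha(\R^{n+1}_+)\hookrightarrow L^{2_\alpha^*(s)}(B_1, |x|^{-s}\,dx)$ is critical and hence not compact. One must exploit the $X^\alpha$-bound on $w_k$ to trap a uniform share of the Hardy--Sobolev mass on an annulus separated from $0$, on which the embedding \emph{is} compact, and to show that the contribution from a small ball around $0$ can be made arbitrarily small uniformly in $k$.
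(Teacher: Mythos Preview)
Your Part (2) is correct and matches the paper's argument. For Part (1) in the cases $s=0$, $\gamma\ge 0$, your outline is also essentially the paper's: the strict inequality $S(n,\alpha,\gamma,0)<S(n,\alpha,0,0)$ for $\gamma>0$ combined with Brezis--Lieb splitting is exactly how the paper proceeds (without any rescaling in that subcase), and $\gamma=0$ is indeed the classical fractional Sobolev result.

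The gap is in the non-vanishing step for $s>0$. Your proposed mechanism---``show that the contribution from a small ball around $0$ can be made arbitrarily small uniformly in $k$''---does not work, and in fact cannot work. The embedding $X^\alpha(\R^{n+1}_+)\hookrightarrow L^{2_\alpha^*(s)}(B_\varepsilon,|x|^{-s}dx)$ is scale-invariant, so there is no small constant to be gained by shrinking $\varepsilon$: a bound of the form $\int_{B_\varepsilon}\frac{|w_k(x,0)|^{2_\alpha^*(s)}}{|x|^s}\,dx\le C(\varepsilon)\|w_k\|^2_{X^\alpha}$ with $C(\varepsilon)\to 0$ is false. The rescaling normalization $\int_{B_1}=\tfrac12$ pins down the dilation parameter but does \emph{not} prevent the entire mass $\tfrac12$ from concentrating at the origin; that is precisely the scenario you must exclude, and a pure $X^\alpha$-bound gives you nothing against it.

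What the paper actually does is use the Palais--Smale information you obtained from Ekeland but then set aside. Testing the approximate Euler--Lagrange relation against $\eta^2 v_k$, with $\eta$ a cutoff equal to $1$ on $B_{1/2}$ and supported in $B_1$, gives (after controlling the commutator terms via the compact weighted embedding on $\mathrm{supp}(\nabla\eta)$)
\[
\|\eta v_k\|^2 \le S(n,\alpha,\gamma,s)\int_{B_1}\frac{|v_k(x,0)|^{2_\alpha^*(s)}}{|x|^s}\,dx + o(1)
= \frac{S(n,\alpha,\gamma,s)}{2^{\,1-2/2_\alpha^*(s)}}\Bigl(\int_{B_1}\frac{|v_k(x,0)|^{2_\alpha^*(s)}}{|x|^s}\,dx\Bigr)^{2/2_\alpha^*(s)}+o(1).
\]
On the other hand the definition of $S(n,\alpha,\gamma,s)$ applied to $\eta v_k$ gives the reverse inequality with constant $S(n,\alpha,\gamma,s)$. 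Since $s>0$ forces $2_\alpha^*(s)>2$ and hence $2^{\,1-2/2_\alpha^*(s)}>1$, the two inequalities together force $\int_{\R^n}\frac{|\eta v_k(x,0)|^{2_\alpha^*(s)}}{|x|^s}\,dx\to 0$, contradicting the normalization $\int_{B_1}=\tfrac12$ (after using local $L^{2_\alpha^*(s)}$-compactness away from the origin to pass from $\eta v_k$ back to $v_k$ on $B_1$). The point is that the Euler--Lagrange relation is what rules out concentration at $0$; boundedness alone is not enough. Once non-vanishing is established, your Brezis--Lieb conclusion is correct and agrees with the paper.
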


\begin{theorem}\label{Theorem Main result in extended form}
Let $0<\alpha<2,$  
$ 0 < s < \alpha<n$ and $ 0\le \gamma < \gamma_H.$  Then,  there exists a non-trivial weak solution to (\ref{Main problem.prime}) in $ X^\alpha (\R_+^{n+1}) $.
\end{theorem}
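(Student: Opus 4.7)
The plan is to produce a nontrivial critical point of the functional $\Phi$ on $X^{\alpha}(\R_+^{n+1})$ via the Mountain Pass Lemma. First I would verify mountain pass geometry: by the trace Hardy inequality (\ref{fractional Trace Hardy inequality}) and the hypothesis $\gamma<\gamma_H$, the quadratic part $\tfrac12\|w\|_{X^{\alpha}}^2-\tfrac{\gamma}{2}\int_{\R^n}|w(\cdot,0)|^2/|x|^{\alpha}\,dx$ is equivalent to $\|w\|_{X^{\alpha}}^2$, while (\ref{fractional Trace H-S inequality}) and the fractional Sobolev inequality give
$$\Phi(w)\ge c_0\|w\|_{X^{\alpha}}^2-C_1\|w\|_{X^{\alpha}}^{2_{\alpha}^*}-C_2\|w\|_{X^{\alpha}}^{2_{\alpha}^*(s)},$$
which is strictly positive on a small sphere since both critical exponents exceed $2$. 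Fixing any nonzero $w_1\in X^{\alpha}$ with $w_1(\cdot,0)\ge 0$, one has $\Phi(tw_1)\to -\infty$ as $t\to\infty$, so there is $w_2=t_0w_1$ with $\Phi(w_2)<0$. This yields a well-defined mountain pass level
$$c:=\inf_{\eta\in\Gamma}\max_{t\in[0,1]}\Phi(\eta(t))>0,\qquad \Gamma:=\{\eta\in C([0,1],X^{\alpha}):\eta(0)=0,\ \eta(1)=w_2\},$$
and, via Ekeland's principle in its mountain pass form, an associated Palais--Smale sequence $(w_k)$ at level $c$.

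Next I would identify the compactness threshold. A concentration--compactness analysis in the weighted extension setting (legitimate since $y^{1-\alpha}$ lies in the Muckenhoupt class $A_2$) should show that a PS sequence decomposes into a weak limit $w_\infty$ plus bubbles of two distinct types under rescalings $\lambda_k^{(n-\alpha)/2}w(\lambda_k\cdot)$: a fractional Sobolev bubble centered at an arbitrary point $x_k$, carrying energy at least $E_S:=\tfrac{\alpha}{2n}\mu_{0,0}(\R^n)^{n/\alpha}$; and a fractional Hardy--Sobolev bubble, necessarily anchored at the origin because of the weight $|x|^{-s}$, carrying energy at least $E_{HS}:=\tfrac{\alpha-s}{2(n-s)}\mu_{\gamma,s}(\R^n)^{(n-s)/(\alpha-s)}$ (both quantities obtained by evaluating $\Phi$ on the respective extremals). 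Consequently, if
$$c<c^*:=\min\{E_S,\ E_{HS}\},$$
no bubble can split off, $(w_k)$ converges strongly to $w_\infty$, and $w_\infty$ is a critical point of $\Phi$ with $\Phi(w_\infty)=c>0$, so $w_\infty\not\equiv 0$ is a weak solution of (\ref{Main problem.prime}).

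The decisive step, and the main obstacle, is to force $c<c^*$. Using the radially decreasing extremal $U$ for $\mu_{\gamma,s}(\R^n)$ provided by Theorem \ref{Theorem the best fractional H-S constan}, I would test $\Phi$ along the ray $\{tU:t\ge 0\}$ and carry out an Aubin--Talenti type expansion: by construction the Hardy-weighted quadratic and the Hardy--Sobolev terms alone would saturate at the value $E_{HS}$ along this ray, and the \emph{extra} strictly positive $L^{2_{\alpha}^*}$ contribution then lowers the maximum strictly, yielding $\sup_{t\ge 0}\Phi(tU)<E_{HS}$. A companion computation with a suitably rescaled Sobolev extremal $V_{\varepsilon}$ (which makes the Hardy and Hardy--Sobolev weighted terms negligibly small) gives $\sup_{t\ge 0}\Phi(tV_{\varepsilon})<E_S$. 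Combining the two strict inequalities produces paths whose maxima lie below $c^*$, hence $c<c^*$, closing the argument. The delicate point is exactly the one highlighted in the introduction: because of the conformal invariance (\ref{the conformal invariance property of Main problem}), the two critical nonlinearities compete asymptotically under rescaling, and the test functions together with their concentration scale must be tuned so that neither term washes out in the limit, which would otherwise only produce a solution of a problem with a single critical nonlinearity. Transferring the Filippucci--Pucci--Robert strategy from the local $p$-Laplacian setting to the weighted-extension formulation is the principal technical challenge.
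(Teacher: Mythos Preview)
Your overall architecture---mountain pass, a level $c$ below a threshold $c^*$, test functions built from the extremals of Theorem~\ref{Theorem the best fractional H-S constan}---matches the paper's. Two points, however, do not go through as written.

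First, the Sobolev part of your threshold uses the wrong constant. You set $E_S=\tfrac{\alpha}{2n}\mu_{0,0}(\R^n)^{n/\alpha}$, but the paper works with $\tfrac{\alpha}{2n}S(n,\alpha,\gamma,0)^{n/\alpha}=\tfrac{\alpha}{2n}\mu_{\gamma,0}(\R^n)^{n/\alpha}$, which for $\gamma>0$ is strictly smaller. Concentration at the origin sees the Hardy potential, so a Sobolev-type loss of compactness anchored at $0$ is governed by $\mu_{\gamma,0}$, not $\mu_{0,0}$; your dichotomy ``Sobolev bubble at $x_k\neq 0$ versus Hardy--Sobolev bubble at $0$'' omits this case, and with the larger $E_S$ your compactness claim can fail. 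Relatedly, your test-function argument with a rescaled Sobolev extremal $V_\varepsilon$ that ``makes the Hardy and Hardy--Sobolev weighted terms negligibly small'' only yields $\sup_{t}\Phi(tV_\varepsilon)\to E_S$, not the strict inequality you need: terms that vanish in the limit cannot produce a uniform gap. The paper fixes both issues at once by testing with the extremal for $S(n,\alpha,\gamma,0)$ (available precisely because $\gamma\ge 0$); along that ray the quadratic and $L^{2_\alpha^*}$ parts alone saturate at $\tfrac{\alpha}{2n}S(n,\alpha,\gamma,0)^{n/\alpha}$, and the strictly positive Hardy--Sobolev integral then pulls the maximum strictly below.

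Second, the paper does \emph{not} carry out a Struwe-type global bubble decomposition, which in this doubly critical, nonlocal, Hardy-weighted setting would be a substantial project in its own right. It proves instead a local dichotomy (Proposition~\ref{Proposition lim of sobolev term in small ball when miniming sequence converges weakly to zero}): if $w_k\rightharpoonup 0$, then for every $\delta>0$ either both $\limsup_k\int_{B_\delta}|u_k|^{2_\alpha^*}\,dx$ and $\limsup_k\int_{B_\delta}|u_k|^{2_\alpha^*(s)}|x|^{-s}\,dx$ vanish, or both are $\ge\varepsilon_0$ for some fixed $\varepsilon_0>0$. One then rescales the Palais--Smale sequence so that $\int_{B_1}|v_k(\cdot,0)|^{2_\alpha^*}\,dx=\varepsilon\in(0,\varepsilon_0/2)$, which contradicts the dichotomy unless the weak limit is nonzero. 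This more elementary route needs only the two strict threshold inequalities, not a profile decomposition, and it does not attempt to prove strong convergence: a nonzero weak limit together with $\Psi'(w_k)\to 0$ already yields a nontrivial critical point.
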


\section{Proof of Theorem \ref{Theorem the best fractional H-S constan}} \label{Section: the proof of attainability of S(n,alpha,gamma,s)}
We shall minimize the functional 
$$I_{\gamma,s}(w)= \frac{k_{\alpha}\int_{\R_+^{n+1}} y^{1-\alpha} |\nabla w|^2  dxdy - \gamma \int_{\R^n} \frac{|w(x,0)|^2}{|x|^{\alpha}}dx }{(\int_{\R^n} \frac{|w(x,0)|^{2_{\alpha}^*(s)}}{|x|^{s}}dx)^\frac{2}{2_{\alpha}^*(s)}}
$$ on the space $X^{\alpha} (\R_+^{n+1})$. Whenever $S(n,\alpha,\gamma,s)$ is attained at some $w\in X^{\alpha} (\R_+^{n+1})$, then it is clear that $u = \text{Tr} (w):= w(.,0)$ will be a function in $ H^{\frac{\alpha}{2}}(\R^n)$, where $\mu_{\gamma,s}(\R^n)$ is attained. 

Note first that inequality (\ref{fractional Trace Hardy inequality}) asserts that $X^{\alpha} (\R_+^{n+1})$ is embedded in the weighted space $L^2(\R^n, |x|^{-\alpha})$ and that this embeding is continuous. If $\gamma < \gamma_H$, it follows from (\ref{fractional Trace Hardy inequality}) that 
$$ \|w\|  := \left( k_{\alpha}\int_{\R_+^{n+1}} y^{1-\alpha} |\nabla w|^2  dxdy - \gamma \int_{\R^n} \frac{|w(x,0)|^2}{|x|^{\alpha}} dx \right)^\frac{1}{2} $$

is well-defined on $X^{\alpha} (\R_+^{n+1})$. Set $\gamma_+ = \text{max} \{\gamma,0\}$ and $\gamma_- = - \text{max} \{\gamma,0\}$. The following inequalities then hold for any $u \in X^{\alpha} (\R_+^{n+1})$,  

\begin{equation}\label{comparable norms}
(1-\frac{\gamma_+}{\gamma_H}) \|w\|^2_{X^{\alpha} (\R_+^{n+1})} \le \|w\|^2 \le (1+\frac{\gamma_-}{\gamma_H}) \|w\|^2_{X^{\alpha} (\R_+^{n+1})} .
\end{equation}
Thus, $\| \ . \ \|$ is equivalent to the norm $\| \ . \ \|_{X^{\alpha} (\R_+^{n+1})}$.

We start by considering the case when $s > 0$. Ekeland's variational principle \cite{Ekeland} applied to the functional $I(w):= I_{\gamma,s}(w)$ yields 
the existence of a minimizing sequence $(w_k)_k$ for $S(n,\alpha,\gamma,s)$ such that  as  $k \to \infty$, 
\begin{equation}
\int_ {\mathbb{R}^n} \frac{|w_k(x,0)|^  {2_{\alpha}^*(s)}}{|x|^s}dx=1,
\end{equation} 
\begin{equation}
I(w_k) \longrightarrow S(n,\alpha,\gamma,s),  
\end{equation}
and  
\begin{equation}
{I'(w_k) \to 0} \text{ in } (X^{\alpha} (\R_+^{n+1}))', 
\end{equation}
where   $(X^{\alpha} (\R_+^{n+1}))'$ denotes the dual of $ X^{\alpha} (\R_+^{n+1})$. Consider the  functionals $J,K:X^{\alpha} (\R_+^{n+1}) \longrightarrow \R $ by 
 $$J(w):= \frac{1}{2} \|w\|^2 = \frac{k_{\alpha}}{2}  \int_{\R_+^{n+1}} y^{1-\alpha} |\nabla w|^2  dxdy - \frac{\gamma}{2} \int_{\mathbb{R}^n} \frac{|w(x,0)|^{2}}{|x|^{\alpha}}dx,$$
 and
$$K(w):= \frac{1}{2_{\alpha}^*(s)}\int_{\R^n} \frac{|w(x,0)|^{2_{\alpha}^*(s)}}{|x|^{s}}  dx. $$
Straightforward computations yield that as $k \to \infty$, 
\begin{equation*}
J(w_k) \longrightarrow \frac{1}{2} S(n,\alpha,\gamma,s), 
\end{equation*}
and
\begin{equation}\label{Ekeland principle}
J'(w_k) - S(n,\alpha,\gamma,s) K'(w_k) \longrightarrow 0 \text{ in } (X^{\alpha} (\R_+^{n+1}))'. %
\end{equation}
Consider now the Levy concentration functions $Q$ of $\frac{|w_k(x,0)|^  {2_{\alpha}^*(s)}}{|x|^s}$, defined as
\begin{equation*}
Q(r)= \int_{B_r} \frac{|w_k(x,0)|^  {2_{\alpha}^*(s)}}{|x|^s} dx \quad \text{for} \quad r> 0,
\end{equation*}
where $B_r$ is the ball of radius $r$ in $\mathbb{R}^n$. Since $\int_ {\mathbb{R}^n} \frac{|w_k(x,0)|^  {2_{\alpha}^*(s)}}{|x|^s}dx=1$ for all $k \in \mathbb{N}$, then by continuity, and up to considering a subsequence, there exists $r_k>0$ such that 
\begin{equation*}
Q(r_k)= \int_{B_{r_k}} \frac{|w_k(x,0)|^  {2_{\alpha}^*(s)}}{|x|^s} dx = \frac{1}{2} \quad \hbox{for all $k \in \mathbb{N}$. }
\end{equation*}
Define the rescaled sequence $v_k(x,y):= r_k^{\frac{n-\alpha}{2}} w_k(r_k x, r_k y)$ for $k \in \mathbb{N}$ and $(x,y) \in \R_+^{n+1}$, in such a way that 
  $(v_k)_{k \in \mathbb{N}}$ is also a minimizing sequence for $S(n,\alpha,\gamma,s)$. Indeed, it is easy to check that $v_k \in X^{\alpha} (\R_+^{n+1})$ and that
\begin{equation*}
k_{\alpha}\int_{\R_+^{n+1}} y^{1-\alpha} |\nabla v_k|^2  dxdy - \gamma \int_{\mathbb{R}^n} \frac{|v_k(x,0)|^{2}}{|x|^{\alpha}}dx = k_{\alpha} \int_{\R_+^{n+1}} y^{1-\alpha} |\nabla w_k|^2  dxdy - \gamma \int_{\mathbb{R}^n} \frac{|w_k(x,0)|^{2}}{|x|^{\alpha}}dx,
\end{equation*}

\begin{equation}
\lim\limits_{k \to \infty} \left( k_{\alpha} \int_{\R_+^{n+1}} y^{1-\alpha} |\nabla v_k|^2  dxdy - \gamma \int_{\mathbb{R}^n} \frac{|v_k(x,0)|^{2}}{|x|^{\alpha}}dx \right)= S(n,\alpha,\gamma,s)
\end{equation}
and 
\begin{equation*}
\int_ {\mathbb{R}^n} \frac{|v_k(x,0)|^  {2_{\alpha}^*(s)}}{|x|^s}dx=\int_ {\mathbb{R}^n} \frac{|w_k(x,0)|^  {2_{\alpha}^*(s)}}{|x|^s}dx=1.
\end{equation*}
Moreover, we have that

\begin{equation} \label{Levy-type for v_k}
\int_{B_1} \frac{|v_k(x,0)|^  {2_{\alpha}^*(s)}}{|x|^s} dx = \frac{1}{2} \quad \hbox{for all $k \in \mathbb{N}$. }
\end{equation}

In addition, $\|v_k\|^2 = S(n,\alpha,\gamma,s) +o(1)$ as ${k \to \infty}$, so (\ref{comparable norms}) yields that $\left(\|v_k\|_{X^{\alpha} (\R_+^{n+1})} \right) _{k \in \mathbb{N}}$ is bounded.
 Therefore, without loss of generality, there exists a subsequence -still denoted $v_k$- such that
 
  \begin{equation} \label{extract weak and strong limit of minimizing sequence - ekeland} 
\hbox{$ v_k  \rightharpoonup v   \text{ in }   X^{\alpha}(\R_+^{n+1})$ \ and \ $
{v_k(.,0) \to v(.,0)}   \text{ in }  L_{loc}^{q}(\R^n), \ \text{for every }  1\le q < 2_{\alpha}^*.$}
\end{equation}
We shall show that the weak limit of the minimizing sequence is not identically zero, that is $v\not\equiv0$. 

Indeed, suppose $v\equiv0.$ It follows from (\ref{extract weak and strong limit of minimizing sequence - ekeland}) that 

\begin{equation} \label{weakly and strongly convergence to zero}
\hbox{$ v_k  \rightharpoonup 0   \text{ in }   X^{\alpha}(\R_+^{n+1})$ \ and \ $
{v_k(.,0) \to 0}   \text{ in }  L_{loc}^{q}(\R^n), \ \text{for every }  1\le q < 2_{\alpha}^*.$}
\end{equation}
For $\delta>0$, define $B_\delta^+:= \{(x,y) \in \R_+^{n+1}: |(x,y)| < \delta \}$, $B_\delta:= \{x \in \R^n: |x| < \delta\}$ and let $\eta \in C_0^{\infty}(\R_+^{n+1})$ be a cut-off function such that $\eta\equiv 1$  in $B^+_{\frac{1}{2}}$ and $0 \le \eta \le 1$ in $\R_+^{n+1}.$  

We use $\eta^2 v_k$ as test function in (\ref{Ekeland principle}) to get that
 
\begin{equation}\label{Use test eta^2 v_k in functional}
\begin{aligned}
&k_{\alpha} \int_{\R_+^{n+1}} y^{1-\alpha} \nabla v_k . \nabla (\eta^2 v_k ) dxdy - \gamma \int_{\mathbb{R}^n} \frac{v_k(x,0) (\eta^2 v_k(x,0) ) }{|x|^{\alpha}}dx\\
& \quad \quad   = S(n,\alpha,\gamma,s) \int_ {\mathbb{R}^n} \frac{|v_k(x,0)|^{2_{\alpha}^*(s)-1} (\eta^2 v_k(x,0) ) }{|x|^s}dx+o(1).
\end{aligned}
\end{equation}
Simple computations yield 
$ | \nabla(\eta v_k)|^2= |v_k \nabla \eta|^2 + \nabla v_k . \nabla(\eta^2  v_k),$
so that we have 
\begin{align*}
  & k_\alpha \int_{\R_+^{n+1}} y^{1-\alpha} | \nabla(\eta v_k)|^2 dxdy - k_\alpha \int_{\R_+^{n+1}} y^{1-\alpha} \nabla v_k . \nabla(\eta^2  v_k) dxdy\\
  & \quad \quad  = k_\alpha \int_{\R_+^{n+1}} y^{1-\alpha} |v_k \nabla \eta|^2 dxdy = k_\alpha \int_{E} y^{1-\alpha}|\nabla \eta|^2  |v_k |^2  dxdy,
\end{align*} 
where $E:= \text{Supp}(|\nabla \eta|).$ Since $\alpha\in (0,2)$, $y^{1-\alpha}$ is an $A_2$-weight, and since $E$ is bounded, we have that the embedding  $H^1(E, y^{1-\alpha}) \hookrightarrow L^2(E, y^{1-\alpha})$ 
 is compact (See \cite{B-C-D-S 1} and \cite{Gol'dshtein-Ukhlov}). It follows 
 from $(\ref{weakly and strongly convergence to zero})_1$ that
  $${ k_\alpha \int_{E} y^{1-\alpha} |v_k \nabla \eta|^2 dxdy \to 0} \text{ as } {k \to \infty  }.$$
Therefore, 
$$k_\alpha \int_{\R_+^{n+1}} y^{1-\alpha} | \nabla(\eta v_k)|^2 dxdy = k_\alpha \int_{\R_+^{n+1}} y^{1-\alpha} \nabla v_k . \nabla(\eta^2  w_k) dxdy + o(1).$$
 By plugging the above estimate into (\ref{Use test eta^2 v_k in functional}), and using (\ref{Levy-type for v_k}), we get that

\begin{equation} \label{estimate for grad term by test function eta^2 v_k}
\begin{aligned}
\|\eta v_k\|^2 & = k_{\alpha} \int_{\R_+^{n+1}} y^{1-\alpha} |\nabla (\eta v_k)|^2 dxdy - \gamma \int_{\mathbb{R}^n} \frac{|\eta v_k(x,0)|^2 }{|x|^{\alpha}}dx\\ 
& = S(n,\alpha,\gamma,s) \int_ {\mathbb{R}^n} \frac{|v_k(x,0)|^{2_{\alpha}^*(s)-2} (|\eta v_k(x,0)|^2) }{|x|^s}dx+o(1) \\ 
&    \le S(n,\alpha,\gamma,s) \int_{B_1} \frac{|v_k(x,0)|^{2_{\alpha}^*(s)}}{|x|^s} dx + o(1) \\ 
&= \frac{S(n,\alpha,\gamma,s)}{2^{1-\frac{2}{2_{\alpha}^*(s)}}} \left( \int_{B_1} \frac{|v_k(x,0)|^  {2_{\alpha}^*(s)}}{|x|^s} dx \right)^\frac{2}{2_{\alpha}^*(s)}+o(1). 
\end{aligned}
\end{equation}
By straightforward computations and H\"older's inequality, we get that
\begin{align*}
\left( \int_{B_1} \frac{|v_k(x,0)|^  {2_{\alpha}^*(s)}}{|x|^s} dx \right)^\frac{1}{2_{\alpha}^*(s)} & = \left( \int_{B_1} \frac{|\eta v_k(x,0) +(1- \eta) v_k(x,0) |^  {2_{\alpha}^*(s)}}{|x|^s} dx \right)^\frac{1}{2_{\alpha}^*(s)}\\
 &\le \left( \int_{B_1} \frac{|\eta v_k(x,0)|^  {2_{\alpha}^*(s)}}{|x|^s} dx \right)^\frac{1}{2_{\alpha}^*(s)} + \left( \int_{B_1} \frac{|(1- \eta )v_k(x,0)|^  {2_{\alpha}^*(s)}}{|x|^s} dx \right)^\frac{1}{2_{\alpha}^*(s)}\\
& \le \left( \int_{\R^n} \frac{|\eta v_k(x,0)|^  {2_{\alpha}^*(s)}}{|x|^s} dx \right)^\frac{1}{2_{\alpha}^*(s)} + C \left( \int_{B_1} |v_k(x,0)|^  {2_{\alpha}^*(s)} dx \right)^\frac{1}{2_{\alpha}^*(s)}.
\end{align*}
From $(\ref{weakly and strongly convergence to zero})_2$, and the fact that $2_{\alpha}^*(s) < 2_{\alpha}^*,$  we obtain
 $$ {\int_{B_1} |v_k(x,0)|^  {2_{\alpha}^*(s)} dx \to 0} \text{ as } {k \to \infty}. $$  
 Therefore, 
\begin{equation} \label{minkowski-type inequality for Hardy-Sobolev term}
\left( \int_{B_1} \frac{|v_k(x,0)|^  {2_{\alpha}^*(s)}}{|x|^s} dx \right)^\frac{2}{2_{\alpha}^*(s)} \le \left( \int_{\R^n} \frac{|\eta v_k(x,0)|^  {2_{\alpha}^*(s)}}{|x|^s} dx \right)^\frac{2}{2_{\alpha}^*(s)} + o(1).
\end{equation} 
Plugging the above inequality into (\ref{estimate for grad term by test function eta^2 v_k}), we get that 
\begin{align*}
\|\eta v_k\|^2 &= k_{\alpha} \int_{\R_+^{n+1}} y^{1-\alpha} |\nabla (\eta v_k)|^2 dxdy - \gamma \int_{\mathbb{R}^n} \frac{|\eta v_k(x,0)|^2 }{|x|^{\alpha}}dx \\
& \le \frac{S(n,\alpha,\gamma,s)}{2^{1-\frac{2}{2_{\alpha}^*(s)}}} \left(\int_ {\mathbb{R}^n} \frac{|\eta v_k(x,0)|^{2_{\alpha}^*(s)}}{|x|^s}dx\right)^\frac{2}{2_{\alpha}^*(s)}+o(1).
\end{align*}
On the other hand, it follows from the definition of $S(n,\alpha,\gamma,s)$ that 
$$ S(n,\alpha,\gamma,s) \left(\int_ {\mathbb{R}^n} \frac{|\eta v_k(x,0)|^{2_{\alpha}^*(s)}}{|x|^s}dx\right)^\frac{2}{2_{\alpha}^*(s)}  \le \|\eta v_k\|^2 \le  \frac{S(n,\alpha,\gamma,s)}{2^{1-\frac{2}{2_{\alpha}^*(s)}}} \left(\int_ {\mathbb{R}^n} \frac{|\eta v_k(x,0)|^{2_{\alpha}^*(s)}}{|x|^s}dx\right)^\frac{2}{2_{\alpha}^*(s)}+o(1).   $$
Note that $\frac{S(n,\alpha,\gamma,s)}{2^{1-\frac{2}{2_{\alpha}^*(s)}}} < S(n,\alpha,\gamma,s)$ for $ s \in (0, \alpha)$, hence (\ref{minkowski-type inequality for Hardy-Sobolev term}) yields that
$$o(1)=\int_ {\R^n} \frac{|\eta v_k(x,0)|^{2_{\alpha}^*(s)}}{|x|^s}dx = \int_{B_1} \frac{|v_k(x,0)|^  {2_{\alpha}^*(s)}}{|x|^s} dx + o(1).$$
This contradicts (\ref{Levy-type for v_k}) and therefore $v\not\equiv 0$. 

We now conclude by proving that $v_k$ converges weakly in $\R_+^{n+1}$ to $v$, and that  $\int_ {\R^n} \frac{|v(x,0)|^  {2_{\alpha}^*(s)}}{|x|^s}dx=1.$ Indeed, for $k \in \mathbb{N},$ let $\theta_k = v_k-v,$ and use the Brezis-Lieb Lemma (see \cite{Brezis-Lieb} and \cite{Yang}) to deduce that 
$$1=\int_{\R^n} \frac{|v_k(x,0)|^{2_{\alpha}^*(s)}}{|x|^{s}}dx = \int_{\R^n} \frac{|v(x,0)|^{2_{\alpha}^*(s)}}{|x|^{s}}dx + \int_{\R^n} \frac{|\theta_k(x,0)|^{2_{\alpha}^*(s)}}{|x|^{s}}dx+o(1),$$
which yields that both
\begin{equation}\label{Bound for H-S terms - v and theta_k. }
\hbox{$\int_{\R^n} \frac{|v(x,0)|^{2_{\alpha}^*(s)}}{|x|^{s}}dx$ and $ \int_{\R^n} \frac{|\theta_k(x,0)|^{2_{\alpha}^*(s)}}{|x|^{s}}dx$ are in the interval  $[0,1].$}
\end{equation}
The weak convergence $\theta_k \rightharpoonup 0$ in $X^\alpha(\R_+^{n+1})$ implies that 
$$\|v_k\|^2 = \|v+\theta_k\|^2 = \|v\|^2+\|\theta_k\|^2 + o(1).$$
By using  (\ref{Ekeland principle}) and the definition of  $S(n,\alpha,\gamma,s),$ we get that
\begin{equation*}
\begin{aligned}
o(1) &= \|v_k\|^2 - S(n,\alpha,\gamma,s)    \int_{\R^n} \frac{|v_k(x,0)|^{2_{\alpha}^*(s)}}{|x|^{s}}dx\\
& = \left(\|v\|^2 - S(n,\alpha,\gamma,s) \int_{\R^n} \frac{|v(x,0)|^{2_{\alpha}^*(s)}}{|x|^{s}}dx \right)       + \left(\|\theta_k\|^2 - S(n,\alpha,\gamma,s) \int_{\R^n} \frac{|\theta_k(x,0)|^{2_{\alpha}^*(s)}}{|x|^{s}}dx \right) + o(1) \\
& \ge  S(n,\alpha,\gamma,s) \left[ \left(\int_{\R^n} \frac{|v(x,0)|^{2_{\alpha}^*(s)}}{|x|^{s}}dx\right)^{\frac{2}{2^*_\alpha(s)}} - \int_{\R^n} \frac{|v(x,0)|^{2_{\alpha}^*(s)}}{|x|^{s}}dx \right] \\
&+ S(n,\alpha,\gamma,s) \left[ \left(\int_{\R^n} \frac{|\theta_k(x,0)|^{2_{\alpha}^*(s)}}{|x|^{s}}dx\right)^{\frac{2}{2^*_\alpha(s)}} - \int_{\R^n} \frac{|\theta_k(x,0)|^{2_{\alpha}^*(s)}}{|x|^{s}}dx \right]+o(1).
\end{aligned}  
\end{equation*}
Set now
$$A:=\left(\int_{\R^n} \frac{|v(x,0)|^{2_{\alpha}^*(s)}}{|x|^{s}}dx\right)^{\frac{2}{2^*_\alpha(s)}} - \int_{\R^n} \frac{|v(x,0)|^{2_{\alpha}^*(s)}}{|x|^{s}}dx,$$
and 
 $$B:= \left(\int_{\R^n} \frac{|\theta_k(x,0)|^{2_{\alpha}^*(s)}}{|x|^{s}}dx\right)^{\frac{2}{2^*_\alpha(s)}} - \int_{\R^n} \frac{|\theta_k(x,0)|^{2_{\alpha}^*(s)}}{|x|^{s}}dx.$$\\
Note that since $2_\alpha^*(s) > 2,$ we have $a^{\frac{2}{2_\alpha^*(s)}} \ge a $ for every $a \in [0,1]$,  and equality holds if and only if $a=0$ or $a=1.$ 
It then follows from (\ref{Bound for H-S terms - v and theta_k. }) that both  $A$ and $B$ are non-negative. On the other hand, the last inequality implies that $A+B =o(1),$ which means that $A=0 $ and $B=o(1)$, that is  
\begin{equation*} 
\int_{\R^n} \frac{|v(x,0)|^{2_{\alpha}^*(s)}}{|x|^{s}}dx = \left(\int_{\R^n} \frac{|v(x,0)|^{2_{\alpha}^*(s)}}{|x|^{s}}dx\right)^{\frac{2}{2^*_\alpha(s)}}, 
\end{equation*}
hence
$$ \text{ either } \int_{\R^n} \frac{|v(x,0)|^{2_{\alpha}^*(s)}}{|x|^{s}}dx =0 \text{ or }  \int_{\R^n} \frac{|v(x,0)|^{2_{\alpha}^*(s)}}{|x|^{s}}dx =1. $$
The fact that $v \not\equiv 0$ yields  $\int_{\R^n} \frac{|v(x,0)|^{2_{\alpha}^*(s)}}{|x|^{s}}dx \neq 0,$
and  $\int_{\R^n} \frac{|v(x,0)|^{2_{\alpha}^*(s)}}{|x|^{s}}dx =1,$ which yields that   
$$k_{\alpha} \int_{\R_+^{n+1}} y^{1-\alpha} |\nabla v|^2  dxdy - \gamma \int_{\mathbb{R}^n} \frac{|v(x,0)|^{2}}{|x|^{\alpha}}dx = S(n,\alpha,\gamma,s). $$
Without loss of generality we may assume  $v \ge 0$ (otherwise we take $|v|$ instead of $v$), and we then obtain a positive extremal for $S(n,\alpha,\gamma,s)$ in the case $s \in (0, \alpha).$\\

$\bullet$ Suppose now that  $s=0$ and $\gamma \ge 0$.
By a result in \cite{Cotsiolis-Tavoularis}, extremals exist for $S(n,\alpha,\gamma, s)$ whenever $s=0$ and $\gamma = 0$. Hence, we only need to show that there exists an extremal for 
$S(n,\alpha,\gamma,0)$ in the case $\gamma > 0$.
First note that in this case, we have that \begin{equation} S(n,\alpha,\gamma,0) < S(n,\alpha,0,0).
\end{equation}
 Indeed, if $w \in X^{\alpha}(\R_+^{n+1}) \setminus \{0\}$  is an extremal for $S(n,\alpha,0,0)$, then by  
 estimating the functional  at $w,$ and using the fact that $\gamma > 0$, we obtain
 \begin{equation*}
 \begin{aligned}
S(n,\alpha,\gamma,0)&=\inf\limits_{u \in  X^{\alpha}(\R_+^{n+1}) \setminus \{0\}}  \quad  \frac{\| u\|^2_{X^{\alpha}(\R_+^{n+1})}- \gamma\int_{\R^n}\frac{|u(x,0)|^{2}}{|x|^{\alpha}}dx}{(\int_{\R^n} |u(x,0)|^{2_{\alpha}^*}dx)^\frac{2}{2_{\alpha}^*}}\\
&  \le \frac{\| w\|^2_{X^{\alpha}(\R_+^{n+1})}- \gamma\int_{\R^n}\frac{|w(x,0)|^{2}}{|x|^{\alpha}}dx}{(\int_{\R^n} |w(x,0)|^{2_{\alpha}^*}dx)^\frac{2}{2_{\alpha}^*}}< \frac{\| w\|^2_{X^{\alpha}(\R_+^{n+1})}}{(\int_{\R^n} |w(x,0)|^{2_{\alpha}^*}dx)^\frac{2}{2_{\alpha}^*}}=S(n,\alpha,0,0). 
 \end{aligned}
\end{equation*}
Now we show that $S(n,\alpha,\gamma,0)$ is attained whenever $S(n,\alpha,\gamma,0) < S(n,\alpha,0,0).$  

 Indeed, let $(w_k)_{k\in \mathbb{N}} \subset X^{\alpha}(\R_+^{n+1}) \setminus \{ 0 \}$ be a minimizing sequence for $S(n,\alpha,\gamma,0)$. Up to multiplying by a positive constant, we assume that
 \begin{equation}
\lim\limits_{k \to \infty} \left( k_{\alpha} \int_{\R_+^{n+1}} y^{1-\alpha} |\nabla w_k|^2  dxdy - \gamma \int_{\mathbb{R}^n} \frac{|w_k(x,0)|^{2}}{|x|^{\alpha}}dx \right)= S(n,\alpha,\gamma,0)
\end{equation}
and
 \begin{equation} \label{minimizing sequence for sobolev is 1}
\int_ {\mathbb{R}^n} |w_k(x,0)|^  {2_{\alpha}^*}dx=1.
\end{equation}
 The sequence $\left(\|w_k\|_{X^{\alpha} (\R_+^{n+1})} \right) _{k \in \mathbb{N}}$ is therefore bounded, and there exists a subsequence - still denoted $w_k$- such that $w_k  \rightharpoonup w \text{ weakly }\text{in }   X^{\alpha} (\R_+^{n+1}).$ The weak convergence implies that 
\begin{align*} \label{Brezis-Lieb Lemma for extension-norm}
\begin{split}
\| w_k\|^2_{X^{\alpha}(\R_+^{n+1}) }&=  \| w_k - w\|^2_{X^{\alpha}(\R_+^{n+1})} +\|  w\|^2_{X^{\alpha}(\R_+^{n+1})}+ 2 k_\alpha \int_{\R_+^{n+1}} y^{1-\alpha} \langle \nabla w, \nabla (w-w_k) \rangle dx dy\\
& = \| w_k - w\|^2_{X^{\alpha}(\R_+^{n+1})} +\|  w\|^2_{X^{\alpha}(\R_+^{n+1})}+o(1)
\end{split}
\end{align*}
and 
\begin{align*}
\int_{\R^n} \frac{|w(x,0)|^{2}}{|x|^{\alpha}}dx &= 
\int_{\R^n} \frac{|(w-w_k)(x,0)|^{2}}{|x|^{\alpha}}dx +\int_{\R^n} \frac{|w_k(x,0)|^{2}}{|x|^{\alpha}}dx + 2\int_{\R^n} \frac{w_k(x,0) (w-w_k)(x,0)}{|x|^{\alpha}}dx   \\
& = \int_{\R^n} \frac{|(w-w_k)(x,0)|^{2}}{|x|^{\alpha}}dx +\int_{\R^n} \frac{|w_k(x,0)|^{2}}{|x|^{\alpha}}dx+ o(1).
\end{align*}
 The Brezis-Lieb Lemma (\cite[Theorem 1]{Brezis-Lieb}) and (\ref{minimizing sequence for sobolev is 1}) yield that 
$\int_ {\R^n} |(w_k - w) (x,0)|^  {2_{\alpha}^*}dx \le 1,$
for large $k$, hence  
\begin{align*}
S(n,\alpha,\gamma,0)  &=  \|w_k\|^2_{X^{\alpha} (\R_+^{n+1})}- \gamma \int_{\R^n} \frac{|w_k(x,0)|^{2}}{|x|^{\alpha}}dx+ o(1) \\
& \ge \| w_k - w \|^2_{X^{\alpha} (\R_+^{n+1})} +\| w\|^2_{X^{\alpha} (\R_+^{n+1})}- \gamma \int_{\R^n} \frac{|w(x,0)|^{2}}{|x|^{\alpha}}dx +o(1)\\
&\ge S(n,\alpha,0,0) (\int_ {\R^n} |(w_k - w) (x,0)|^  {2_{\alpha}^*}dx)^\frac{2}{2_{\alpha}^*}+  S(n,\alpha,\gamma,0)(\int_ {\R^n} |w(x,0)|^  {2_{\alpha}^*}dx)^\frac{2}{2_{\alpha}^*}+o(1)\\ 
&\ge S(n,\alpha,0,0) \int_ {\R^n} |(w_k - w) (x,0)|^  {2_{\alpha}^*}dx+ S(n,\alpha,\gamma,0)  \int_ {\R^n} |w (x,0)|^  {2_{\alpha}^*}dx+o(1).
\end{align*}
 Use the Brezis-Lieb Lemma again to get that
  \begin{align*}
S(n,\alpha,\gamma,0)  &\ge \left(S(n,\alpha,0,0) - S(n,\alpha,\gamma,0)\right) \int_ {\mathbb{R}^n} |(w_k - w) (x,0)|^  {2_{\alpha}^*}dx+S(n,\alpha,\gamma,0) \int_ {\mathbb{R}^n} |w_k  (x,0)|^  {2_{\alpha}^*}dx+o(1) \\ 
& =\left(S(n,\alpha,0,0) - S(n,\alpha,\gamma,0)\right) \int_ {\mathbb{R}^n} |(w_k - w) (x,0)|^  {2_{\alpha}^*}dx+S(n,\alpha,\gamma,0)+o(1) .
 \end{align*}
Since $S(n,\alpha,\gamma,0) < S(n,\alpha,0,0)$, we get that ${w_k(.,0) \to w(.,0)}$ in $L^{2_\alpha^*}(\R^n),$ that is
$ \int_ {\mathbb{R}^n} |w (x,0)|^  {2_{\alpha}^*}dx =1.$  The lower semi-continuity of $I$ then implies that $w$ is a minimizer for $S(n,\alpha,\gamma,0).$ Note that  $|w| $ is also an extremal in $X^\alpha(\R_+^{n+1})$ for $S(n,\alpha,\gamma,0),$ therefore there exists a non-negative extremal for $S(n,\alpha,\gamma,s)$ in the case $\gamma > 0$ and $s=0$, and this completes the proof of the case when $s=0$ and $\gamma \geq 0$.

Now we consider the case when $\gamma <0$. 

\begin{claim} \label{Claim: no extremal when gamma <0 }
 If $\gamma\le 0$, then  $S(n,\alpha,\gamma,0) = S(n,\alpha,0,0)$, hence, there are no extremals for $S(n,\alpha,\gamma, 0)$ whenever $\gamma<0.$ 
\end{claim}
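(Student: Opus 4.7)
The plan is to prove $S(n,\alpha,\gamma,0)=S(n,\alpha,0,0)$ by two matching one-sided bounds, and then to read off the non-attainment for $\gamma<0$ from the strictness of the classical Sobolev inequality forced on any would-be minimizer.

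One direction is free. Since $-\gamma\ge 0$, the extra term $-\gamma\int_{\R^n}|w(x,0)|^2|x|^{-\alpha}\,dx$ is non-negative, so the numerator of the quotient defining $S(n,\alpha,\gamma,0)$ dominates that of $S(n,\alpha,0,0)$ pointwise on $X^{\alpha}(\R_+^{n+1})\setminus\{0\}$; taking infima gives $S(n,\alpha,\gamma,0)\ge S(n,\alpha,0,0)$.

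For the matching upper bound I would exploit translation invariance of the gradient part of the quotient versus the lack of invariance of the Hardy weight $|x|^{-\alpha}$. Using density of $C_0^\infty(\overline{\R_+^{n+1}})$ in $X^\alpha$, pick $w_k\in C_0^\infty(\overline{\R_+^{n+1}})$ normalized by $\int_{\R^n}|w_k(x,0)|^{2_\alpha^*}\,dx=1$ with $\|w_k\|_{X^\alpha}^2\le S(n,\alpha,0,0)+1/k$. Fix a unit vector $\xi\in\R^n$ and set $\widetilde w_k(x,y):=w_k(x-R_k\xi,y)$ for $R_k$ chosen so large that $\operatorname{supp}\widetilde w_k(\cdot,0)\subset\{|x|>k\}$, which is possible because $w_k$ has compact $x$-support. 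Then $\|\widetilde w_k\|_{X^\alpha}$ and $\int_{\R^n}|\widetilde w_k(x,0)|^{2_\alpha^*}\,dx$ are left unchanged, while
\[
\int_{\R^n}\frac{|\widetilde w_k(x,0)|^2}{|x|^\alpha}\,dx\;\le\;k^{-\alpha}\,\|w_k(\cdot,0)\|_{L^2(\R^n)}^2,
\]
which can be made $\le 1/k$ by enlarging $R_k$ further. Plugging $\widetilde w_k$ into the quotient for $S(n,\alpha,\gamma,0)$ then yields $S(n,\alpha,\gamma,0)\le S(n,\alpha,0,0)+(1+|\gamma|)/k$, and letting $k\to\infty$ gives the claim. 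The main (mild) difficulty is arranging the translated Hardy integral to be quantitatively small without manipulating an actual Sobolev extremal, which is resolved cleanly by the compact-support trick.

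Finally, assume a minimizer $w\in X^\alpha(\R_+^{n+1})$ for $S(n,\alpha,\gamma,0)$ exists for some $\gamma<0$. Its trace is not a.e.\ zero (otherwise the denominator vanishes), so $\int_{\R^n}|w(x,0)|^2|x|^{-\alpha}\,dx>0$. The plain Sobolev inequality on $X^\alpha$ yields $\|w\|_{X^\alpha}^2\ge S(n,\alpha,0,0)\bigl(\int_{\R^n}|w(x,0)|^{2_\alpha^*}dx\bigr)^{2/2_\alpha^*}$; adding the strictly positive quantity $(-\gamma)\int_{\R^n}|w(x,0)|^2|x|^{-\alpha}dx$ to the left-hand side therefore gives
\[
S(n,\alpha,\gamma,0)\,\Bigl(\int_{\R^n}|w(x,0)|^{2_\alpha^*}dx\Bigr)^{\!2/2_\alpha^*} \;>\; S(n,\alpha,0,0)\,\Bigl(\int_{\R^n}|w(x,0)|^{2_\alpha^*}dx\Bigr)^{\!2/2_\alpha^*},
\]
in direct contradiction with the equality of constants established in the previous step.
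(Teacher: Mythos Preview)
Your proof is correct and follows the same overall strategy as the paper: the trivial inequality $S(n,\alpha,\gamma,0)\ge S(n,\alpha,0,0)$ from the sign of the Hardy term, the reverse inequality by translating test functions to infinity, and the non-attainment by observing that any extremal for $S(n,\alpha,\gamma,0)$ would beat $S(n,\alpha,0,0)$.

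The one genuine difference is in how the reverse inequality is obtained. The paper translates the actual Sobolev extremal $w$ (whose existence is quoted from \cite{Cotsiolis-Tavoularis}) and asserts that $\int_{\R^n}|w(x,0)|^2\,|x+\delta\bar x|^{-\alpha}\,dx\to 0$ as $\delta\to\infty$; this is true but requires a short dominated-convergence or splitting argument that the paper leaves implicit. You instead use density to work with compactly supported approximate minimizers, so the translated Hardy integral is controlled trivially by pushing the support into $\{|x|>M\}$. Your route is marginally more self-contained and does not rely on the existence of a Sobolev extremal at all, while the paper's route is shorter once that existence is granted. Both are entirely valid.
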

Indeed, we first note that for $\gamma \le 0, $ we have 
$S(n,\alpha,\gamma,0) \ge S(n,\alpha,0,0).$
On the other hand, if we consider $w \in X^{\alpha}(\R_+^{n+1}) \setminus \{0\}$ to be an extremal for $S(n,\alpha,0,0)$ and define for $\delta \in \R $, and $\bar{x} \in \R^n$, the function  
$ w_{\delta}:= w(x-\delta \bar{x} , y)$ for $x\in\R^n$ and $y \in \R_+,$ then by a change of variable, we  get 
$$S(n, \alpha, \gamma, 0) \leq I_{\delta}: =\frac{ \| w_\delta\|^2_{X^{\alpha}(\R_+^{n+1})}- \gamma \int_{\R^n} \frac{ |w_\delta(x,0)|^2 }{|x|^\alpha}dx}{(\int_{\mathbb{R}^n} | w_\delta(x,0)|^{2_{\alpha}^*}dx)^\frac{2}{2_{\alpha}^*}} =  \frac{ \| w\|^2_{X^{\alpha}(\R_+^{n+1})}- \gamma \int_{\R^n} \frac{ |w(x,0)|^2 }{|x+\delta \bar{x}|^\alpha}dx}{(\int_{\mathbb{R}^n} | w(x,0)|^{2_{\alpha}^*}dx)^\frac{2}{2_{\alpha}^*}},$$
so that 
$$ S(n, \alpha, \gamma, 0) \leq \lim\limits_{\delta \to \infty} I_\delta = \frac{ \| w\|^2_{X^{\alpha}(\R_+^{n+1})}}{(\int_{\mathbb{R}^n} | w(x,0)|^{2_{\alpha}^*}dx)^\frac{2}{2_{\alpha}^*}}=S(n,\alpha,0,0).$$
Therefore,  $S(n,\alpha,\gamma, 0) = S(n,\alpha,0,0).$ 
Since there are extremals for $S(n,\alpha,0,0)$
(see \cite{Cotsiolis-Tavoularis}), there is none for     $S(n,\alpha,\gamma,0)$ whenever $\gamma<0.$
This establishes (2) 
and completes the proof of Theorem \ref{Theorem Existence for fractional H-S-M, using the best constant}.  

Back to Theorem \ref{Theorem the best fractional H-S constan}, since the $\alpha$-harmonic function $w \ge 0$ is a minimizer for $S(n,\alpha,\gamma,s)$ in $X^{\alpha}(\R_+^{n+1}) \setminus \{0\},$ which exists from Theorem \ref{Theorem Existence for fractional H-S-M, using the best constant}, then 
$ u:=\text{Tr}(w)= w(.,0) \in H^\frac{\alpha}{2}(\R^n)\setminus \{0\}$ and by  (\ref{extension norm}), $u$ is a minimizer for $\mu_{\gamma,s}(\R^n)$ in $H^\frac{\alpha}{2}(\R^n) \setminus \{0\}$. Therefore,  (1) and (2) of Theorem \ref{Theorem the best fractional H-S constan} hold.

For (3),
let $u^*$ be the Schwarz symmetrization of $u$. By the fractional Polya-Szeg\"o inequality \cite{Y.J. Park P-S inequality}, we have
$$\| (-\Delta)^{\frac{\alpha}{2}} u^* \|^2_{L^2(\R^n)} \le \| (-\Delta)^{\frac{\alpha}{2}} u \|^2_{L^2(\R^n)}. $$
Furthermore, it is clear (Theorem 3.4. of \cite{Lieb-Loss}) that  
 \begin{equation*}
\hbox{$    \int_{\R^n}\frac{|u|^{2}}{|x|^{\alpha}}dx \le \int_{\R^n}\frac{|u^*|^{2}}{|x|^{\alpha}}dx $\quad and \quad $
\int_{\R^n} \frac{|u|^{2_{\alpha}^*(s)}}{|x|^{s}}dx \le \int_{\R^n} \frac{|u^*|^{2_{\alpha}^*(s)}}{|x|^{s}}dx.$}
         \end{equation*}
Combining the above inequalities and the fact that $ \gamma \ge 0,$ we get that 
$$\mu_{\gamma,s}(\R^n) \le \frac{ \| (-\Delta)^{\frac{\alpha}{2}} u^* \|^2_{L^2(\R^n)}-
\gamma\int_{\R^n}\frac{|u^*|^{2}}{|x|^{\alpha}}dx}{(\int_{\R^n} \frac{|u^*|^{2_{\alpha}^*(s)}}{|x|^{s}}dx)^\frac{2}{2_{\alpha}^*(s)}} \le \frac{ \| (-\Delta)^{\frac{\alpha}{2}} u \|^2_{L^2(\R^n)}-
\gamma\int_{\R^n}\frac{|u|^{2}}{|x|^{\alpha}}dx}{(\int_{\R^n} \frac{|u|^{2_{\alpha}^*(s)}}{|x|^{s}}dx)^\frac{2}{2_{\alpha}^*(s)}}=\mu_{\gamma,s}(\R^n).  $$
This implies that $u^*$ is also a minimizer and achieves the infimum of $\mu_{\gamma,s}(\R^n).$ Therefore the equality sign holds in all the inequalities above, that is 
 \begin{equation*}
\hbox{$   \gamma \int_{\R^n}\frac{|u|^{2}}{|x|^{\alpha}}dx = \gamma \int_{\R^n}\frac{|u^*|^{2}}{|x|^{\alpha}}dx $\quad and \quad $
\int_{\R^n} \frac{|u|^{2_{\alpha}^*(s)}}{|x|^{s}}dx = \int_{\R^n} \frac{|u^*|^{2_{\alpha}^*(s)}}{|x|^{s}}dx.$}
         \end{equation*}
From Theorem 3.4. of \cite{Lieb-Loss}, in the case of equality, it follows that $u=|u|= u^*$ if either $\gamma \neq 0$ or if $s \neq 0$. In particular, $u$ is positive, radially symmetric and decreasing about origin. Hence $u$ must approach a limit as ${|x| \to \infty},$ which must be zero.

\section{Proof of Theorem \ref{Theorem Main result}} 

We shall now use the existence of extremals for the fractional Hardy-Sobolev type inequalities, established in Section \ref{Section: the proof of attainability of S(n,alpha,gamma,s)}, to prove that there exists a nontrivial  weak solution for (\ref{Main problem.prime}). 
The energy functional $\Psi$ associated to (\ref{Main problem.prime}) is defined as follows:
\begin{equation}\label{Psi}
\Psi(w)= \frac{1}{2} \| w\|^2 -\frac{1}{2_{\alpha}^*} \int_{\R^n} |u|^{2_{\alpha}^*}dx -\frac{1}{2_{\alpha}^*(s)}\int_{\R^n} \frac{|u|^{2_{\alpha}^*(s)}}{|x|^{s}}  dx, \quad \text{ for } w \in X^{\alpha} (\R_+^{n+1}),
\end{equation}
where again $u:= Tr (w)=w(.,0)$.  Fractional trace Hardy, Sobolev and Hardy-Sobolev inequalities  yield that     $\Psi \in C^1(X^{\alpha} (\R_+^{n+1})).$ Note that a  weak solution to (\ref{Main problem.prime}) is a nontrivial critical point of $\Psi$.

Throughout this section,  we use the following notation for any sequence $(w_k)_{k \in \mathbb{N}} \in X^{\alpha} (\R_+^{n+1})$:  $$u_k:=\text{Tr}(w_k)=w_k(.,0), \ \text{ for all } k \in \mathbb{N}.$$
We split the proof in three parts: 

\subsection{Existence of a suitable Palais-Smale sequence}
We first verify that the energy functional $\Psi$ satisfies the  conditions of  the Mountain Pass Lemma leading to a minimax energy level that is below a suitable threshold. The following is standard.

\begin{lemma}[Ambrosetti and Rabinowitz \cite{Ambrosetti-Rabinowitz}] \label{Theorem MPT- Ambrosetti-Rabinowitz version}
Let $(V,\|\,\|$) be a Banach space and $\Psi: {V \to \R}$ a $C^1-$functional satisfying the following conditions:

(a) $ \Psi(0)=0,$ \\
(b) There exist $\rho, R>0$ such that $\Psi(u) \ge \rho$ for all $u \in V$, with $\|u\|=R,$\\
(c) There exists $v_0 \in V $ such that $\limsup\limits_{t \to \infty} \Psi(tv_0) <0.$

Let $t_0>0$ be such that $\|t_0v_0\|> R$ and $\Psi(t_0v_0)<0,$ and define
$$c_{v_0}(\Psi):=\inf\limits_{\sigma \in \Gamma} \sup\limits_{t \in[0,1]} \Psi(\sigma(t)) \text{ where } \Gamma:= \{\sigma\in C([0,1],V): \sigma(0)=0 \text{ and } \sigma(1)=t_0v_0 \}.$$
Then, $c_w(\Psi) \ge \rho>0$, and there exists a Palais-Smale sequence at level $c_w(\Psi)$, that is  $(w_k)_{k \in \mathbb{N}} \in V$ such that 
 $$
 \lim\limits_{k \to \infty} \Psi(w_k)=c_{v_0}(\Psi) \ \text{ and } \lim\limits_{k \to \infty}  \Psi'(w_k)=0 \, \, \text{strongly in} \, V' .$$
\end{lemma}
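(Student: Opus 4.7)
The plan is to split the argument into two parts: the lower bound $c_{v_0}(\Psi) \geq \rho$, which follows topologically from hypothesis (b), and the existence of a Palais-Smale sequence at this level, for which I would use the quantitative deformation lemma.

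For the lower bound, fix an arbitrary path $\sigma \in \Gamma$. The map $t \mapsto \|\sigma(t)\|$ is continuous on $[0,1]$ with $\|\sigma(0)\| = 0 < R$ and $\|\sigma(1)\| = \|t_0 v_0\| > R$, so by the intermediate value theorem there exists $t_\ast \in (0,1)$ with $\|\sigma(t_\ast)\| = R$. Hypothesis (b) then gives $\Psi(\sigma(t_\ast)) \geq \rho$, hence $\sup_{t \in [0,1]} \Psi(\sigma(t)) \geq \rho$, and taking the infimum over $\Gamma$ yields $c_{v_0}(\Psi) \geq \rho > 0$.

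For the Palais-Smale sequence, write $c := c_{v_0}(\Psi)$ and argue by contradiction: assume no Palais-Smale sequence exists at level $c$. Then I can find $\bar{\varepsilon}, \delta > 0$ with $\|\Psi'(w)\|_{V'} \geq \delta$ whenever $|\Psi(w) - c| \leq 2\bar{\varepsilon}$. Shrinking $\bar{\varepsilon}$ so that $\bar{\varepsilon} < c/2$, both endpoints $0$ and $t_0 v_0$ fall strictly below the strip $[c - 2\bar{\varepsilon},\, c + 2\bar{\varepsilon}]$ (since $\Psi(0) = 0 < c - 2\bar{\varepsilon}$ and $\Psi(t_0 v_0) < 0 < c - 2\bar{\varepsilon}$). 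I then apply the quantitative deformation lemma to obtain, for each $\varepsilon \in (0, \bar{\varepsilon})$, a continuous map $\eta : V \to V$ that is the identity off $\{w : |\Psi(w) - c| \leq 2\bar{\varepsilon}\}$ and satisfies $\Psi(\eta(w)) \leq c - \varepsilon$ whenever $\Psi(w) \leq c + \varepsilon$. Choosing $\sigma \in \Gamma$ with $\sup_t \Psi(\sigma(t)) \leq c + \varepsilon$, the composition $\eta \circ \sigma$ again belongs to $\Gamma$ because $\eta$ fixes both endpoints, yet $\sup_t \Psi(\eta(\sigma(t))) \leq c - \varepsilon$, contradicting the infimum definition of $c$. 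Therefore some Palais-Smale sequence at level $c$ must exist.

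The main technical obstacle is the construction of the deformation $\eta$ in the Banach-space setting, where no genuine gradient flow of $\Psi$ is available. This is handled in the classical way by producing a locally Lipschitz pseudo-gradient vector field $X$ on $\{w \in V : \Psi'(w) \neq 0\}$ satisfying $\|X(w)\| \leq 2\|\Psi'(w)\|_{V'}$ and $\langle \Psi'(w), X(w) \rangle \geq \|\Psi'(w)\|_{V'}^2$, obtained from a partition-of-unity argument on a suitable open cover, and then integrating the ODE $\dot{\eta} = -\chi(\eta) X(\eta)$ with a cutoff $\chi$ supported in a slightly enlarged critical-level strip and vanishing outside it. The statement is purely abstract; in the next subsection it will be applied to the concrete functional $\Psi$ in (\ref{Psi}), where the real work will be to verify (b) and (c) and to control the resulting minimax level below the threshold required for compactness.
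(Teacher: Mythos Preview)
Your proof is correct and follows the classical route: the intermediate value theorem for the lower bound, and a contradiction argument via the quantitative deformation lemma (built on a pseudo-gradient field) for the Palais--Smale sequence. Note, however, that the paper does not actually prove this lemma; it is quoted as a standard result from Ambrosetti--Rabinowitz \cite{Ambrosetti-Rabinowitz} and introduced with ``The following is standard.'' So there is no paper proof to compare against---your argument simply supplies the textbook proof that the authors chose to omit.
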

We now prove the following.

\begin{proposition} \label{MPT with bound} Suppose  $0 \le \gamma < \gamma_H \text{ and } 0 \le s<\alpha$, and consider $\Psi$ defined in (\ref{Psi}) on the Banach space $X^{\alpha} (\R_+^{n+1})$. Then, there exists  $w \in X^{\alpha} (\R_+^{n+1}) \setminus \{0\}$ such that $w\ge 0$  and  $0<c_w(\Psi)<c^\star,$ where  
\begin{equation} \label{Definition of C^star}
c^\star = \text{min} \left\{ \frac{\alpha}{2n} S(n,\alpha,\gamma,0)^{\frac{n}{\alpha}}, \frac{\alpha-s}{2(n-s)} S(n,\alpha,\gamma,s)^{\frac{n-s}{\alpha-s}}   \right\},
\end{equation}
and a Palais-Smale sequence $(w_k)_{k \in \mathbb{N}}$ in $X^{\alpha} (\R_+^{n+1})$ at energy level $c_w(\Psi)$, that is,  
\begin{equation} \label{P-S condition (Lim) on minimizing sequence}
 \lim\limits_{k \to \infty}  \Psi'(w_k)=0 \text{ strongly in } (X^{\alpha} (\R_+^{n+1}))' \text{ and } \lim\limits_{k \to \infty} \Psi(w_k)= c_w(\Psi).
\end{equation}
\end{proposition}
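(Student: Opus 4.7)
I will verify the three hypotheses of the Mountain Pass Lemma \ref{Theorem MPT- Ambrosetti-Rabinowitz version} for $\Psi$, and then choose the end-point $v_0$ of the admissible paths cleverly so that the resulting minimax level lies strictly below $c^\star$. Condition (a), $\Psi(0)=0$, is immediate from (\ref{Psi}). For (b), I combine the equivalence of norms (\ref{comparable norms}), which under $\gamma \ge 0$ gives $\|w\|^2 \ge (1-\gamma/\gamma_H)\|w\|_{X^\alpha(\R_+^{n+1})}^2$, with the trace Sobolev and trace Hardy--Sobolev inequalities (\ref{fractional Trace H-S inequality}) to obtain
\[
\Psi(w) \ge \tfrac{1}{2}(1-\gamma/\gamma_H)\|w\|_{X^\alpha(\R_+^{n+1})}^2 - C_1\|w\|_{X^\alpha(\R_+^{n+1})}^{2_\alpha^*} - C_2\|w\|_{X^\alpha(\R_+^{n+1})}^{2_\alpha^*(s)}.
\]
Since both $2_\alpha^*,\,2_\alpha^*(s) > 2$, the right-hand side exceeds some $\rho > 0$ on a sphere $\{\|w\|_{X^\alpha(\R_+^{n+1})}=R\}$ of sufficiently small radius. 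Condition (c) holds for any non-negative $v_0 \not\equiv 0$, since $\Psi(tv_0)\to-\infty$ as $t\to\infty$.

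\emph{Ray-wise minimax bound.} For a fixed non-negative $v_0 \not\equiv 0$ with trace $u_0=\text{Tr}(v_0)$, set $a=\|v_0\|^2$, $p=\int_{\R^n}|u_0|^{2_\alpha^*}\,dx$, and $q=\int_{\R^n}|u_0|^{2_\alpha^*(s)}/|x|^s\,dx$. The single-variable function $t\mapsto \Psi(tv_0)=\tfrac{a}{2}t^2 - \tfrac{p}{2_\alpha^*}t^{2_\alpha^*} - \tfrac{q}{2_\alpha^*(s)}t^{2_\alpha^*(s)}$ attains a positive maximum at a unique $t^\ast > 0$; discarding either the Sobolev or the Hardy--Sobolev term produces the two-sided upper bound
\[
\sup_{t\ge 0}\Psi(tv_0)\;<\;\min\!\Bigl(\tfrac{\alpha}{2n}\bigl(a/p^{2/2_\alpha^*}\bigr)^{n/\alpha},\;\tfrac{\alpha-s}{2(n-s)}\bigl(a/q^{2/2_\alpha^*(s)}\bigr)^{(n-s)/(\alpha-s)}\Bigr),
\]
with strictness in each slot automatic as long as the \emph{omitted} integral ($q$ or $p$ respectively) is strictly positive. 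Picking $t_0$ so large that $\Psi(t_0v_0) < 0$ and $\|t_0v_0\|_{X^\alpha(\R_+^{n+1})}>R$ yields $c_{v_0}(\Psi)\le \sup_{t\ge 0}\Psi(tv_0)$, so it suffices to force the right-hand side to be at most $c^\star$.

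\emph{Choice of test function.} Writing $c^\star=\min(A,B)$ with $A=\tfrac{\alpha}{2n}S(n,\alpha,\gamma,0)^{n/\alpha}$ and $B=\tfrac{\alpha-s}{2(n-s)}S(n,\alpha,\gamma,s)^{(n-s)/(\alpha-s)}$, I split into two cases. If $A\le B$, take $v_0$ to be a non-negative extremal of $S(n,\alpha,\gamma,0)$, whose existence for $\gamma\ge 0$ is guaranteed by Theorem \ref{Theorem Existence for fractional H-S-M, using the best constant}(1); then $a/p^{2/2_\alpha^*}=S(n,\alpha,\gamma,0)$, the first slot of the minimum equals exactly $A=c^\star$, and strictness delivers $c_{v_0}(\Psi)<c^\star$. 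Symmetrically, if $B<A$, take $v_0$ to be a non-negative extremal of $S(n,\alpha,\gamma,s)$, available from the same theorem because $s>0$; the second slot then equals $B=c^\star$ and again $c_{v_0}(\Psi)<c^\star$. In either case, the positivity of the trace $u_0$ furnished by Theorem \ref{Theorem the best fractional H-S constan}(3) ensures $p,q > 0$, which is precisely what activates the strict inequality in the bound. Applying Lemma \ref{Theorem MPT- Ambrosetti-Rabinowitz version} then supplies the required Palais--Smale sequence satisfying (\ref{P-S condition (Lim) on minimizing sequence}).

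\emph{Main obstacle.} The decisive point is the dichotomy in the choice of test function: the extremal associated with the smaller of the two thresholds exactly saturates its one-term minimax bound, and the strictly negative contribution of the competing nonlinearity---nonzero precisely because the extremal's trace is pointwise positive---is what pushes the full minimax strictly below $c^\star$. Everything else reduces to the elementary calculus of a function of a single real variable.
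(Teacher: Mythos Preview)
Your argument is correct and follows essentially the same route as the paper: verify the mountain-pass geometry, then take as end-point an extremal of whichever constant realizes the minimum in $c^\star$, and obtain the strict inequality $c_w(\Psi)<c^\star$ from the strictly negative contribution of the other critical term along the ray. The paper phrases the strictness as a short contradiction (if $\sup_t\Psi(tw)=\sup_t f(t)$, compare values at the two maximizers), whereas you obtain it directly from $\Psi(t^\ast v_0)<f(t^\ast)\le\sup_t f(t)$; these are the same observation.

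One small over-justification: you cite Theorem \ref{Theorem the best fractional H-S constan}(3) for pointwise positivity of the trace $u_0$ to force $p,q>0$, but that result does not cover the case $\gamma=0$ when the chosen extremal is for $S(n,\alpha,0,0)$. No matter: all you actually need is $u_0\not\equiv 0$, which is automatic since an extremal for $S(n,\alpha,\gamma,0)$ (resp.\ $S(n,\alpha,\gamma,s)$) must have nonzero denominator, and then $u_0\ge 0$, $u_0\not\equiv 0$ already gives $\int|u_0|^{2_\alpha^\ast}>0$ and $\int|u_0|^{2_\alpha^\ast(s)}/|x|^s>0$.
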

\begin{proof}[Proof of Proposition \ref{MPT with bound}]
In the sequel, we will use freely the following elementary identities involving $2_{\alpha}^*(s)$: 
$$\hbox{$ \frac{1}{2} - \frac{1}{2_{\alpha}^*}= \frac{\alpha}{2n}$,\quad   
$\frac{2_{\alpha}^*}{2_{\alpha}^*-2}= \frac{n}{\alpha}$,\quad    $\frac{1}{2} - \frac{1}{2_{\alpha}^*(s)}= \frac{\alpha-s}{2(n-s)}$\quad  and \quad  $\frac{2_{\alpha}^*(s)}{2_{\alpha}^*(s)-2}=\frac{n-s}{\alpha-s}.$}$$
First, we note that the functional $\Psi$ satisfies the  hypotheses of Lemma \ref{Theorem MPT- Ambrosetti-Rabinowitz version}, and that condition (c) is satisfied for any $w \in X^{\alpha} (\R_+^{n+1}) \setminus \{0\}.$
Indeed, it is standard to show that $\Psi \in C^1(X^{\alpha} (\R_+^{n+1}))$ and clearly  $\Psi(0)=0$, so that (a) of Lemma \ref{Theorem MPT- Ambrosetti-Rabinowitz version} is satisfied. For (b) note that by the definition of $S(n,\alpha,\gamma,s),$ we have that 
$$ (\int_{\R^n} |u|^{2_{\alpha}^*}dx)^\frac{2}{2_{\alpha}^*}\le  S(n,\alpha,\gamma,0)^{-1} \|w\|^2
\text { and }  (\int_{\R^n} \frac{|u|^{2_{\alpha}^*(s)}}{|x|^{s}}dx)^\frac{2}{2_{\alpha}^*(s)}\le  S(n,\alpha,\gamma,s)^{-1} \|w\|^2.$$
Hence,
\begin{equation}
\begin{aligned}
\Psi(w) &\ge  \frac{1}{2} \| w\|^2-\frac{1}{2_{\alpha}^*} S(n,\alpha,\gamma,0)^{-\frac{2_{\alpha}^*}{2}} \| w\|^{2_{\alpha}^*} -\frac{1}{2_{\alpha}^*(s)} S(n,\alpha,\gamma,s)^{-\frac{2_{\alpha}^*(s)}{2}} \| w\|^{2_{\alpha}^*(s)} \\
& = \left( \frac{1}{2}-\frac{1}{2_{\alpha}^*} S(n,\alpha,\gamma,0)^{-\frac{2_{\alpha}^*}{2}} \| w\|^{2_{\alpha}^*-2} -\frac{1}{2_{\alpha}^*(s)} S(n,\alpha,\gamma,s)^{-\frac{2_{\alpha}^*(s)}{2}} \| w\|^{2_{\alpha}^*(s)-2} \right) \|w\|^2.
\end{aligned}
\end{equation}
Since $s \in [0,\alpha),$ we have that $2_{\alpha}^*-2 >0 $ and $2_{\alpha}^*(s)-2 >0.$ Thus, by (\ref{comparable norms}), we can find $R>0$ such that  $\Psi(w) \ge \rho$ for all $w \in X^{\alpha} (\R_+^{n+1})$ with $\|w\|_{X^{\alpha} (\R_+^{n+1})} = R.$  

Regarding (c), note that 
$$\Psi(tw) = \frac{t^2}{2} \| w\|^2 -\frac{t^{2_{\alpha}^*}}{2_{\alpha}^*} \int_{\R^n} |u|^{2_{\alpha}^*}dx -\frac{ t^{ 2_{\alpha}^*(s)}  }{2_{\alpha}^*(s)}\int_{\R^n} \frac{|u|^{2_{\alpha}^*(s)}}{|x|^{s}}  dx,$$
hence $ \lim\limits_{t \to \infty} \Psi(tw)=  -\infty$ for any $w \in X^{\alpha} (\R_+^{n+1}) \setminus \{0\}$, which means that    there exists $t_w>0$ such that $\|t_w w\|_{X^{\alpha} (\R_+^{n+1})} > R$ and $\Psi(tw) <0,$ for $t\ge t_w.$ 

Now we show that there exists  $ w \in X^{\alpha} (\R_+^{n+1}) \setminus \{0\}$ such that $w\ge 0$  and 
\begin{equation}\label{bound for c_w when s=0}
c_w (\Psi)< \frac{\alpha}{2n} S(n,\alpha,\gamma,0)^{\frac{n}{\alpha}}.
\end{equation}
From Theorem \ref{Theorem Existence for fractional H-S-M, using the best constant}, we know that there exists a  non-negative extremal $w$ in $X^{\alpha} (\R_+^{n+1})$ for $S(n,\alpha,\gamma,0)$ whenever $\gamma \ge 0.$  By the definition of $t_w$ and the fact that $c_w > 0,$ we obtain

$$c_w(\Psi) \le \sup\limits_{t\ge0} \Psi(tw)\le \sup\limits_{t\ge0}f(t),
\quad \text{ where }  f(t) = \frac{t^2}{2} \| w\|^2 -\frac{t^{2_{\alpha}^*}}{2_{\alpha}^*} \int_{\R^n} |u|^{2_{\alpha}^*}dx \quad \forall t>0. $$
Straightforward computations yield that $f(t)$ attains  its maximum at the point $\tilde{t} =  \left( \frac{\|w\|^2}{\int_{\R^n} |u|^{2_{\alpha}^*}dx} \right)^\frac{1}{2^*_\alpha -2}.$ It follows that 
$$\sup\limits_{t\ge0}f(t) = (\frac{1}{2} - \frac{1}{2_{\alpha}^*})  \left( \frac{\|w\|^2}{(\int_{\R^n} |u|^{2_{\alpha}^*}dx)^\frac{2}{2_{\alpha}^*}}\right)^\frac{2_{\alpha}^*}{2_{\alpha}^*-2} = \frac{\alpha}{2n} \left( \frac{\|w\|^2}{(\int_{\R^n} |u|^{2_{\alpha}^*}dx)^\frac{2}{2_{\alpha}^*}}\right)^{\frac{n}{\alpha}}.$$
Since $w$ is an extremal for $S(n,\alpha,\gamma,0)$, we get that 
$$c_w(\Psi) \le \sup\limits_{t\ge0}f(t) = \frac{\alpha}{2n} S(n,\alpha,\gamma,0)^{\frac{n}{\alpha}}. $$

We now need to show that equality does not hold in (\ref{bound for c_w when s=0}). Indeed, otherwise  
we would have that $0<c_w = \sup\limits_{t\ge0}\Psi(tw) = \sup\limits_{t\ge0}f(t).$ Consider  $t_1$ (resp. $t_2 > 0$) where $\sup\limits_{t\ge0} \Psi(tw) (\text{resp.,} \ \sup\limits_{t\ge0}f(t) )$ is attained. We get that  
$$f(t_1) - \frac{ t_1^{ 2_{\alpha}^*(s)}  }{2_{\alpha}^*(s)}\int_{\R^n} \frac{|w(x,0)|^{2_{\alpha}^*(s)}}{|x|^{s}}  dx = f(t_2),$$
which means that $f(t_1) > f(t_2)$ since  $t_1 > 0$. This contradicts   the fact that $t_2$ is a maximum point of $f(t)$, hence the strict inequality in (\ref{bound for c_w when s=0}).

To finish the proof of Proposition \ref{MPT with bound}, we can assume without loss that 
$$\frac{\alpha-s}{2(n-s)} S(n,\alpha,\gamma,s)^{\frac{n-s}{\alpha-s}} < \frac{\alpha}{2n} S(n,\alpha,\gamma,0)^{\frac{n}{\alpha}}.$$

Let now $w$ in $X^{\alpha} (\R_+^{n+1}) \setminus \{0\} $ be a positive minimizer for $S(n,\alpha,\gamma,s)$, whose existence was established in Section \ref{Section: the proof of attainability of S(n,alpha,gamma,s)}, and set  
$$\bar{f}(t) = \frac{t^2}{2} \| w\|^2  -\frac{ t^{ 2_{\alpha}^*(s)}  }{2_{\alpha}^*(s)}\int_{\R^n} \frac{|u|^{2_{\alpha}^*(s)}}{|x|^{s}}  dx.$$
As above, we have 
$$c_w(\Psi) \le \sup\limits_{t\ge0}f(t) =  (\frac{1}{2} - \frac{1}{2_{\alpha}^*(s)})  \left( \frac{\|w\|^2}{(\int_{\R^n} \frac{|u|^{2_{\alpha}^*(s)}}{|x|^{s}}dx)^\frac{2}{2_{\alpha}^*(s)}}\right)^\frac{2_{\alpha}^*(s)}{2_{\alpha}^*(s)-2} = \frac{\alpha-s}{2(n-s)} S(n,\alpha,\gamma,s)^{\frac{n-s}{\alpha-s}}. $$
Again,  if equality holds, then 
$0<c_w(\Psi) \le \sup\limits_{t\ge0}\Psi(tw) = \sup\limits_{t\ge0}\bar{f}(t)$, and if $t_1,t_2>0$ are points where the respective suprema are attained, then a  contradiction is reached since 
$$\bar{f}(t_1) -\frac{ t_1^{ 2_{\alpha}^*}  }{2_{\alpha}^*}\int_{\R^n} |u|^{2_{\alpha}^*}  dx = \bar{f}(t_2).$$
Therefore,
\begin{equation*} 
0 < c_w(\Psi) < c^\star = \text{min} \left\{ \frac{\alpha}{2n} S(n,\alpha,\gamma,0)^{\frac{n}{\alpha}}, \frac{\alpha-s}{2(n-s)} S(n,\alpha,\gamma,s)^{\frac{n-s}{\alpha-s}}   \right\}.
\end{equation*}
Finally, the existence of a Palais-Smale sequence at that level follows immediately from Lemma \ref{Theorem MPT- Ambrosetti-Rabinowitz version}
\end{proof}

\subsection{Analysis of the Palais-Smale sequences}

We now study the concentration properties of weakly null Palais-Smale sequences. For $\delta>0$, we shall write $B_\delta := \left\{ x \in \R^n : |x| < \delta \right\}.$
\begin{proposition} \label{Proposition lim of sobolev term in small ball when miniming sequence converges weakly to zero}
Let  $0 \le \gamma < \gamma_H \text{ and } 0 < s<\alpha.$ Assume that $(w_k)_{k \in \mathbb{N}}$ is a Palais-Smale sequence  of $\Psi$ at energy level  $c \in (0, c^\star )$. 
If $w_k \rightharpoonup 0 $ in $X^{\alpha} (\R_+^{n+1}) \text{ as } {k \to \infty}$, then there exists a positive constant $\epsilon_0= \epsilon_0 (n, \alpha, \gamma,c,s)>0$ such that for every $\delta>0$, one of the following holds:
\begin{enumerate}
\item $\limsup\limits_{k \to \infty} \int_{B_{\delta}}|u_k|^{2^*_{\alpha}}dx = \limsup\limits_{k \to \infty} \int_{B_{\delta}} \frac{|u_k|^{2^*_{\alpha}(s)}}{|x|^s}dx =0;$
\item $\limsup\limits_{k \to \infty} \int_{B_{\delta}} |u_k|^{2^*_{\alpha}}dx\,\, \hbox{and} \,\, \limsup\limits_{k \to \infty} \int_{B_{\delta}} \frac{|u_k|^{2^*_{\alpha}(s)}}{|x|^s}dx \ge \epsilon_0,$
\end{enumerate}

\end{proposition}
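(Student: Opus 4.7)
The plan is to extract a localized reverse inequality from the Palais--Smale identity, and then combine it with the global energy bound $c<c^\star$ to force a dichotomy on the two concentrations
\[
a_k:=\int_{B_\delta}|u_k|^{2_\alpha^*}\,dx,\qquad b_k:=\int_{B_\delta}\frac{|u_k|^{2_\alpha^*(s)}}{|x|^s}\,dx.
\]
Fix $\delta>0$ and choose $\eta\in C_c^\infty(\overline{\R_+^{n+1}})$ with $\eta\equiv 1$ on $B_{\delta/2}^+$, $\mathrm{supp}\,\eta\subset B_\delta^+$, and $0\le\eta\le 1$. I would test $\Psi'(w_k)=o(1)$ against $\eta^2 w_k$, use the identity $|\nabla(\eta w_k)|^2=|w_k\nabla\eta|^2+\nabla w_k\cdot\nabla(\eta^2 w_k)$, and invoke the compact embedding $H^1(E,y^{1-\alpha})\hookrightarrow L^2(E,y^{1-\alpha})$ on $E:=\mathrm{supp}\,|\nabla\eta|$ (from the $A_2$ property of $y^{1-\alpha}$, as already used in Section 3) combined with $w_k\rightharpoonup 0$ to eliminate the $|w_k\nabla\eta|^2$ term. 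This yields the localized identity
\[
\|\eta w_k\|^2=\int_{\R^n}\eta^2|u_k|^{2_\alpha^*}\,dx+\int_{\R^n}\eta^2\frac{|u_k|^{2_\alpha^*(s)}}{|x|^s}\,dx+o(1).
\]

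The key reverse estimate comes from writing $\eta^2|u_k|^{2_\alpha^*}=(\eta u_k)^2|u_k|^{2_\alpha^*-2}$ and applying H\"older with exponents $2_\alpha^*/2$ and $2_\alpha^*/(2_\alpha^*-2)$, then the Sobolev inequality \eqref{fractional Trace H-S inequality} for $\eta w_k$, yielding
$\int_{\R^n}\eta^2|u_k|^{2_\alpha^*}\,dx\le S(n,\alpha,\gamma,0)^{-1}\|\eta w_k\|^2\, a_k^{\alpha/n}$. An analogous factorisation of the weight $|x|^{-s}$ together with the Hardy--Sobolev inequality gives $\int_{\R^n}\eta^2\frac{|u_k|^{2_\alpha^*(s)}}{|x|^s}\,dx\le S(n,\alpha,\gamma,s)^{-1}\|\eta w_k\|^2\, b_k^{(\alpha-s)/(n-s)}$. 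Plugging both into the localized identity and dividing by $\|\eta w_k\|^2$ (which we may assume is bounded away from $0$ along a subsequence) produces
\[
1\le S(n,\alpha,\gamma,0)^{-1} a_k^{\alpha/n}+S(n,\alpha,\gamma,s)^{-1} b_k^{(\alpha-s)/(n-s)}+o(1). \qquad(\diamond)
\]

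If $a_k,b_k\to 0$ then the localized identity forces $\|\eta w_k\|^2\to 0$ and alternative (1) holds. Otherwise $(\diamond)$ is valid along a subsequence. The standard Palais--Smale computation $\Psi(w_k)-\tfrac12\langle\Psi'(w_k),w_k\rangle=c+o(1)$ rewrites as
\[
\frac{\alpha}{2n}\int_{\R^n}|u_k|^{2_\alpha^*}\,dx+\frac{\alpha-s}{2(n-s)}\int_{\R^n}\frac{|u_k|^{2_\alpha^*(s)}}{|x|^s}\,dx=c+o(1),
\]
so $\limsup a_k\le 2nc/\alpha$ and $\limsup b_k\le 2(n-s)c/(\alpha-s)$. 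Since $c<c^\star$ and $c^\star\le\frac{\alpha}{2n}S(n,\alpha,\gamma,0)^{n/\alpha}$, resp.\ $c^\star\le\frac{\alpha-s}{2(n-s)}S(n,\alpha,\gamma,s)^{(n-s)/(\alpha-s)}$, these bounds translate into $S(n,\alpha,\gamma,0)^{-1} a_k^{\alpha/n}\le (c/c^\star)^{\alpha/n}<1$ and similarly $S(n,\alpha,\gamma,s)^{-1} b_k^{(\alpha-s)/(n-s)}\le (c/c^\star)^{(\alpha-s)/(n-s)}<1$. Substituting each of these upper bounds on one side of $(\diamond)$ forces a strictly positive lower bound on the \emph{other}, yielding alternative (2) with
\[
\epsilon_0:=\min\Big\{S(n,\alpha,\gamma,0)^{\frac{n}{\alpha}}\big[1-(c/c^\star)^{\frac{\alpha-s}{n-s}}\big]^{\frac{n}{\alpha}},\; S(n,\alpha,\gamma,s)^{\frac{n-s}{\alpha-s}}\big[1-(c/c^\star)^{\frac{\alpha}{n}}\big]^{\frac{n-s}{\alpha-s}}\Big\}>0.
\]

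The main obstacle is not the (routine) localization step but the final quantitative comparison: Sobolev alone would easily yield a lower bound on whichever of $a_k,b_k$ carries positive mass, but forcing \emph{simultaneous} lower bounds on both requires coupling $(\diamond)$ with the two energy upper bounds, and it is precisely the specific form of the threshold $c^\star$ in \eqref{Definition of C^star} that makes both terms $(c/c^\star)^{\bullet}$ strictly less than $1$; the argument degenerates at $c=c^\star$.
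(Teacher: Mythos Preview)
Your core estimates are correct and the overall strategy matches the paper's: localize via a cutoff, test the Palais--Smale relation, and couple the resulting inequality with the global energy bound $c<c^\star$. The paper organizes the same computation slightly differently, introducing the localized energy $\mu:=\limsup_k\int_{B_\delta}\big(|(-\Delta)^{\alpha/4}u_k|^2-\gamma|u_k|^2/|x|^\alpha\big)\,dx$ and proving separately that $\theta^{2/2_\alpha^*}\le S(n,\alpha,\gamma,0)^{-1}\mu$, $\zeta^{2/2_\alpha^*(s)}\le S(n,\alpha,\gamma,s)^{-1}\mu$ and $\mu\le\theta+\zeta$; your $(\diamond)$ is essentially these three combined after dividing by $\|\eta w_k\|^2$.

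There is, however, a genuine gap in your case analysis. You assert that if $(a_k,b_k)$ do not both tend to $0$ then $\|\eta w_k\|^2$ stays bounded away from $0$ along a subsequence, so that $(\diamond)$ applies. With your cutoff ($\eta\equiv1$ on $B_{\delta/2}^+$, $\operatorname{supp}\eta\subset B_\delta^+$) this is not automatic: a priori the $L^{2_\alpha^*}$ mass of $u_k$ could concentrate in the annulus $B_\delta\setminus B_{\delta/2}$, producing $\limsup a_k>0$ while $\int_{\R^n}\eta^2|u_k|^{2_\alpha^*}dx$ and $\|\eta w_k\|^2$ both vanish; then neither alternative (1) nor $(\diamond)$ would follow. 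The paper closes this gap with a preliminary lemma showing that \emph{all} the relevant integrals, including the critical one $\int_D|u_k|^{2_\alpha^*}dx$, vanish on every $D\subset\subset\R^n\setminus\{0\}$, which makes $\theta,\zeta,\mu$ independent of $\delta$. That step is not free: ruling out Sobolev bubbling away from the origin already uses the threshold $c<\frac{\alpha}{2n}S(n,\alpha,\gamma,0)^{n/\alpha}$, via exactly the localization-plus-H\"older device you wrote down, but with a cutoff whose trace is supported in $\R^n\setminus\{0\}$. A lighter patch in your framework is to note that $\int_D\frac{|u_k|^{2_\alpha^*(s)}}{|x|^s}dx\to0$ on annuli follows directly from Rellich compactness (since $2_\alpha^*(s)<2_\alpha^*$ and $|x|^{-s}$ is bounded there); this makes $\limsup b_k$ independent of $\delta$, and once $\limsup b_k>0$ for \emph{some} $\delta$ your argument can be run for every $\delta$. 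Either way, the ``no concentration on annuli'' step must be made explicit before the dichotomy is complete.
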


The proof of Proposition \ref{Proposition lim of sobolev term in small ball when miniming sequence converges weakly to zero} requires the following two lemmas.

\begin{lemma} \label{Lemma limi of Hardy-Sobolev and grad terms are zero on D}
Let $(w_k)_{k \in \mathbb{N}}$ be a Palais-Smale sequence as in Proposition \ref{Proposition lim of sobolev term in small ball when miniming sequence converges weakly to zero}. 
If $w_k \rightharpoonup 0 $ in $X^{\alpha} (\R_+^{n+1}),$  then for any  $D  \subset\subset \R^n \setminus \{0\},$ there exists a subsequence of $(w_k)_{k \in \mathbb{N}}$, still denoted by $(w_k)_{k \in \mathbb{N}}$, such that

\begin{equation}\label{lim of Hardy and Hardy-Sobolev terms on D* are zero}
\lim\limits_{k \to \infty}  \int_{D} \frac{|u_k|^2}{|x|^\alpha} dx =\lim\limits_{k \to \infty} \int_{D} \frac{|u_k|^{2_{\alpha}^*(s)}}{|x|^{s}}  dx =0
\end{equation}  
  and
   \begin{equation}\label{lim of Sobolev and grad terms on D are zero}
\lim\limits_{k \to \infty}  \int_{D} |u_k|^{2_{\alpha}^*} dx=  \lim\limits_{k \to \infty} \int_D |(-\Delta)^{\frac{\alpha}{4}} u_k|^2dx = 0 ,
\end{equation}
where $u_k:=w_k(.,0)$ for all $k\in \mathbb{N}.$  
  \end{lemma}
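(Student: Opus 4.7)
The plan is to first record that any Palais--Smale sequence for $\Psi$ at a finite level is bounded in $X^{\alpha}(\R_+^{n+1})$ (the standard mountain-pass argument, since both $2_{\alpha}^*$ and $2_{\alpha}^*(s)$ exceed $2$). Combined with the hypothesis $w_k \rightharpoonup 0$ in $X^{\alpha}(\R_+^{n+1})$, the compact trace embedding yields, along a subsequence, $u_k \to 0$ strongly in $L^q_{\mathrm{loc}}(\R^n)$ for every $q<2_{\alpha}^*$. Since $D \subset\subset \R^n\setminus\{0\}$, both $|x|^{-\alpha}$ and $|x|^{-s}$ are bounded on $D$, so
\[
\int_D \frac{|u_k|^2}{|x|^\alpha}\,dx \le C(D)\int_D |u_k|^2\,dx \to 0, \qquad \int_D \frac{|u_k|^{2_{\alpha}^*(s)}}{|x|^s}\,dx \le C(D)\int_D |u_k|^{2_{\alpha}^*(s)}\,dx \to 0,
\]
the last using $q=2_{\alpha}^*(s)<2_{\alpha}^*$ (since $s>0$). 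This gives (\ref{lim of Hardy and Hardy-Sobolev terms on D* are zero}).

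For the critical integral $\int_D |u_k|^{2_{\alpha}^*}\,dx$ in (\ref{lim of Sobolev and grad terms on D are zero}), I would argue by contradiction via localised concentration--compactness: if the limit were positive, then passing to a further subsequence one could find a point $x_0 \in \overline D \setminus\{0\}$ carrying a nontrivial atom of the weak$^*$ limit $\mu$ of $|u_k|^{2_{\alpha}^*}\,dx$. To produce the sharp atom bound $\mu(\{x_0\}) \ge S(n,\alpha,0,0)^{n/\alpha}$, pick $\eta \in C_c^\infty(\R^{n+1})$ with $\eta \equiv 1$ near $(x_0,0)$ and support bounded away from $\{x=0\}$, test $\Psi'(w_k) \to 0$ against $\eta^2 w_k$, and use the identity $\nabla w_k \cdot \nabla(\eta^2 w_k) = |\nabla(\eta w_k)|^2 - |\nabla \eta|^2 w_k^2$, the compact $A_2$-weighted embedding $H^1(E,y^{1-\alpha}) \hookrightarrow L^2(E,y^{1-\alpha})$ on $E = \mathrm{supp}\,\nabla \eta$, and the vanishing of the Hardy and Hardy--Sobolev terms established above, in order to reduce the PS identity to
\[
k_\alpha \int_{\R_+^{n+1}} y^{1-\alpha} |\nabla(\eta w_k)|^2\,dxdy = \int_{\R^n} \eta^2 |u_k|^{2_{\alpha}^*}\,dx + o(1).
\]
Plugging this into the fractional Sobolev trace inequality applied to $\eta w_k$, and using $\eta^{2_{\alpha}^*} \le \eta^2$ (as $\eta \le 1$ and $2_{\alpha}^* > 2$), then shrinking the support of $\eta$ around $(x_0,0)$, produces the desired atom bound. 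The atom is then ruled out by the identity
\[
c = \lim\bigl(\Psi(w_k) - \tfrac{1}{2}\Psi'(w_k)[w_k]\bigr) = \tfrac{\alpha}{2n}\lim \int |u_k|^{2_{\alpha}^*}\,dx + \tfrac{\alpha-s}{2(n-s)}\lim \int \frac{|u_k|^{2_{\alpha}^*(s)}}{|x|^s}\,dx \ge \tfrac{\alpha}{2n} S(n,\alpha,0,0)^{n/\alpha} \ge c^\star,
\]
where the last inequality uses $S(n,\alpha,\gamma,0)\le S(n,\alpha,0,0)$ for $\gamma \ge 0$; this contradicts $c<c^\star$. Hence $\int_D |u_k|^{2_{\alpha}^*}\,dx \to 0$.

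With all three trace-type integrals on a neighborhood of $D$ now vanishing, the displayed PS identity collapses to $k_\alpha \int y^{1-\alpha} |\nabla(\eta w_k)|^2 \to 0$, so $\eta w_k \to 0$ strongly in $X^{\alpha}(\R_+^{n+1})$, and the trace inequality gives $\eta(\cdot,0)u_k \to 0$ strongly in $H^{\alpha/2}(\R^n)$. Writing $(-\Delta)^{\alpha/4} u_k = (-\Delta)^{\alpha/4}(\eta u_k) - [(-\Delta)^{\alpha/4},\eta]\,u_k$ on $D$ (where $\eta\equiv 1$), the first term vanishes in $L^2(\R^n)$, while the commutator term tends to zero in $L^2_{\mathrm{loc}}$ using $u_k \to 0$ in $L^2_{\mathrm{loc}}$ and classical bounds for fractional commutators against smooth multipliers. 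This yields $\int_D |(-\Delta)^{\alpha/4} u_k|^2\,dx \to 0$ and completes (\ref{lim of Sobolev and grad terms on D are zero}). The main technical obstacle is the concentration step: the cutoff must be calibrated so the PS error terms are genuinely $o(1)$ while the trace Sobolev inequality delivers the sharp atom threshold $S(n,\alpha,0,0)^{n/\alpha}$ --- any slack in this constant would destroy the energy-level contradiction.
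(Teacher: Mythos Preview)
Your argument for \eqref{lim of Hardy and Hardy-Sobolev terms on D* are zero} is exactly the paper's. For \eqref{lim of Sobolev and grad terms on D are zero} your route is correct but genuinely different from the paper's.

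\textbf{What the paper does.} The paper never invokes concentration--compactness or atoms. With the same cut-off $\eta$ (so that $\eta_*\equiv 1$ on $D$ and $\mathrm{supp}\,\eta_*\subset\subset\R^n\setminus\{0\}$), it tests $\Psi'(w_k)\to 0$ against $\eta^2 w_k$, uses the $A_2$ compact embedding to replace $\nabla w_k\cdot\nabla(\eta^2 w_k)$ by $|\nabla(\eta w_k)|^2+o(1)$, drops the (vanishing) Hardy and Hardy--Sobolev terms, and arrives at
\[
\|\eta w_k\|^2 \le \int_{\R^n} |\eta_* u_k|^2\,|u_k|^{2_\alpha^*-2}\,dx + o(1).
\]
Then a single H\"older step together with the definition of $S(n,\alpha,\gamma,0)$ gives
\[
\|\eta w_k\|^2 \le S(n,\alpha,\gamma,0)^{-1}\,\|\eta w_k\|^2\,\Bigl(\int_{\R^n}|u_k|^{2_\alpha^*}\Bigr)^{(2_\alpha^*-2)/2_\alpha^*}+o(1),
\]
and the \emph{global} bound $\int_{\R^n}|u_k|^{2_\alpha^*}\le \tfrac{2n}{\alpha}c+o(1)$ (from $\Psi(w_k)-\tfrac12\Psi'(w_k)[w_k]=c+o(1)$) plus $c<\tfrac{\alpha}{2n}S(n,\alpha,\gamma,0)^{n/\alpha}$ forces $\|\eta w_k\|\to 0$ directly. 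Both conclusions of \eqref{lim of Sobolev and grad terms on D are zero} then follow at once.

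\textbf{Comparison.} Your approach first rules out Sobolev atoms away from the origin via the Lions mechanism, then feeds this back into the Palais--Smale identity to get $\|\eta w_k\|\to 0$. The paper's approach short-circuits the atom analysis entirely: the H\"older factorisation $|\eta_* u_k|^2|u_k|^{2_\alpha^*-2}$ separates a local factor (controlled by $\|\eta w_k\|^2$) from a global factor (controlled by the energy level $c$), so the sub-threshold hypothesis $c<c^\star$ closes the loop in one inequality. This is both shorter and uses the constant $S(n,\alpha,\gamma,0)$ exactly, so your worry about needing the ``sharp atom threshold $S(n,\alpha,0,0)^{n/\alpha}$'' does not arise. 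Conversely, your commutator treatment of the term $\int_D|(-\Delta)^{\alpha/4}u_k|^2$ is more careful than the paper's final line, which passes from $\int_{\R^n}|(-\Delta)^{\alpha/4}(\eta_* u_k)|^2\to 0$ to $\int_D|(-\Delta)^{\alpha/4}u_k|^2\to 0$ by citing $\eta_*|_D\equiv 1$ as if the operator were local; your decomposition $(-\Delta)^{\alpha/4}u_k=(-\Delta)^{\alpha/4}(\eta_* u_k)+(-\Delta)^{\alpha/4}((1-\eta_*)u_k)$ on $D$, with the second term handled by the off-diagonal kernel decay and $u_k\to 0$ in $L^q_{\mathrm{loc}}$, is the honest way to justify that step.
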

\begin{proof}[Proof of Lemma \ref{Lemma limi of Hardy-Sobolev and grad terms are zero on D}]
Fix  $D \subset\subset \R^n \setminus \{0\},$ and note that 
the following fractional Sobolev embedding is compact:
$$H^{\frac{\alpha}{2}}(\R^n)\hookrightarrow L^q(D)  \text{ for every } 1\le q < 2^*_\alpha .$$  

Using the trace inequality (\ref{trace inequality between extension norm and fractional sobolev}), and the assumption that $w_k \rightharpoonup 0 $ in $X^{\alpha} (\R_+^{n+1}),$  we get that 
$$ {u_k \to 0} \quad \text{ strongly for every } 1\le q < 2^*_\alpha.$$
On the other hand, the fact that $|x|^{-1}$ is bounded on $D \subset\subset \R^n \setminus \{0\}$ implies that there exist constants $C_1,C_2 >0$ such that
$$0 \le \lim\limits_{k \to \infty}  \int_{D} \frac{|u_k|^2}{|x|^\alpha} dx \le C_1 \lim\limits_{k \to \infty}  \int_{D} |u_k|^2 dx   $$
and 
$$0 \le \lim\limits_{k \to \infty} \int_{D} \frac{|u_k|^{2_{\alpha}^*(s)}}{|x|^{s}}  dx  \le C_2 \lim\limits_{k \to \infty} \int_{D} |u_k|^{2_{\alpha}^*(s)}  dx  . $$
Since $s \in (0,\alpha),$ we have that $1\le 2, 2^*_\alpha (s)< 2^*_\alpha.$ Thus, (\ref{lim of Hardy and Hardy-Sobolev terms on D* are zero}) holds.

To show 
 (\ref{lim of Sobolev and grad terms on D are zero}), we  
let $\eta \in C_0^{\infty}(\R_+^{n+1})$ be a cut-off function such that  $\eta_*:=\eta(.,0) \in C_0^{\infty}(\R^n \setminus \{0\}),$  $\eta_* \equiv 1$  in $D$ and $0 \le \eta \le 1$ in $\R_+^{n+1}.$ 
 We first note that  
 \begin{equation} \label{estimate for grad term with cut-off (eta w_k)}
 k_{\alpha}  \int_{\R_+^{n+1}} y^{1-{\alpha}} |\nabla ( \eta w_k )|^2 dxdy= k_{\alpha} \int_{\R_+^{n+1}}  y^{1-{\alpha}} |  \eta \nabla w_k|^2 dxdy+ o(1).
\end{equation}
Indeed, apply the following elementary inequality for vectors $X,Y$ in $\R^{n+1}$, 
$$\left | |X+Y|^2 - |X|^2 \right | \le C (|X||Y|+|Y|^2),$$
with $ X=  y^{\frac{1-\alpha}{2}}w_k  \nabla\eta $ and $Y=  y^{\frac{1-\alpha}{2}} \eta \nabla w_k $,  to get for all $k \in \mathbb{N}$, that
\begin{align*}
 \left|  y^{1-\alpha} | \nabla( \eta w_k )|^2 -   y^{1-\alpha} |  \eta \nabla w_k|^2 \right| \le C \left(  y^{1-\alpha} |w_k  \nabla\eta|  | \eta \nabla w_k |+  y^{1-\alpha} |\eta \nabla w_k |^2 \right). 
\end{align*}
By H\"older's  inequality, we get  
\begin{equation} \label{Holder for gradient 1}
\begin{aligned}
&\left | \int_{\R_+^{n+1}}  y^{1-\alpha} | \nabla( \eta w_k )|^2 dxdy-  \int_{\R_+^{n+1}}  y^{1-\alpha} |  \eta \nabla w_k|^2  dxdy \right| \\
& \le \ C_3 \|w_k\|_{X^{\alpha} (\R_+^{n+1})}  \ (\int_{\text{Supp} (\nabla \eta) }  y^{1-\alpha} |w_k|^2 dxdy  )^\frac{1}{2} + C_3 \int_{\text{Supp} (\nabla \eta) }  y^{1-\alpha} |w_k|^2 dxdy \\
& \le C_4 \left[(\int_{\text{Supp} (\nabla \eta) }  y^{1-\alpha} |w_k|^2 dxdy)^{\frac{1}{2}} +\int_{\text{Supp} (\nabla \eta) }  y^{1-\alpha} |w_k|^2 dxdy  \right].
\end{aligned}
\end{equation}
Since the embedding $H^1(\text{Supp} (\nabla \eta), y^{1-\alpha}) \hookrightarrow L^2(\text{Supp} (\nabla \eta), y^{1-\alpha})$  is compact,
and  $w_k \rightharpoonup 0 $ in $X^{\alpha} (\R_+^{n+1}),$ we get that  $$\int_{\text{Supp} (\nabla \eta) }  y^{1-\alpha} |w_k|^2 dxdy = o(1),$$
which gives  

$$\int_{\R_+^{n+1}}  y^{1-\alpha} | \nabla( \eta w_k )|^2 dxdy= \int_{\R_+^{n+1}}  y^{1-\alpha} |  \eta \nabla w_k|^2  dxdy+o(1). $$
Thus, (\ref{estimate for grad term with cut-off (eta w_k)}) holds.

Now recall that the sequence $(w_k)_{k \in \mathbb{N}}$ has the following property:
  \begin{equation} \label{Derivative of the functional converges strongly to zero}
 \lim\limits_{k \to \infty}  \Psi'(w_k)=0 \text{ strongly in } (X^{\alpha} (\R_+^{n+1}))' .
 \end{equation}
Since $\eta^2 w_k \in X^{\alpha} (\R_+^{n+1})$ for all $k \in \mathbb{N}$, we can use it as 
a test function in (\ref{Derivative of the functional converges strongly to zero}) to get that 
  \begin{equation*}
 \begin{aligned} 
 o(1) &= \langle \Psi' (w_k), \eta^2 w_k\rangle \\
 & = k_{\alpha} \int_{\R_+^{n+1}} y^{1-\alpha} \langle \nabla w_k , \nabla (\eta^2 w_k )\rangle dxdy - \gamma \int_{\R^n} \frac{\eta_*^2 |u_k|^2  }{|x|^{\alpha}}dx - \int_ {\R^n} \eta_*^2 |u_k|^{2_{\alpha}^*}   dx - \int_ {\R^n} \frac{\eta_*^2 |u_k|^{2_{\alpha}^*(s)}   }{|x|^s}dx.
 \end{aligned}
 \end{equation*}
 Regarding the first term, we have 
$$k_{\alpha} \int_{\R_+^{n+1}} y^{1-\alpha} \langle \nabla w_k , \nabla (\eta^2 w_k )\rangle dxdy =  k_\alpha \int_{\R_+^{n+1}} y^{1-\alpha} |\eta \nabla w_k |^2 dxdy  +k_{\alpha} \int_{\R_+^{n+1}} y^{1-\alpha} w_k \langle \nabla (\eta^2) ,\nabla w_k \rangle   dxdy .$$
From H\"older's inequality, and the fact that ${w_k \to 0}$ in  $L^2(\text{Supp} (|\nabla \eta|), y^{1-\alpha}),$ it follows that as $k \to \infty$, 

\begin{align*}
& \left| k_{\alpha} \int_{\R_+^{n+1}} y^{1-\alpha} \langle \nabla w_k , \nabla (\eta^2 w_k )\rangle dxdy -  k_\alpha \int_{\R_+^{n+1}} y^{1-\alpha} |\eta \nabla w_k  |^2 dxdy\right| = \left| k_{\alpha} \int_{\R_+^{n+1}} y^{1-\alpha} w_k \langle \nabla( \eta^2) ,\nabla w_k \rangle   dxdy \right| \\
&\qquad \qquad \qquad \le k_{\alpha} \int_{\R_+^{n+1}} y^{1-\alpha} |w_k|  |\nabla (\eta^2)| |\nabla w_k|   dxdy \le C  \int_{\text{Supp}(|\nabla \eta|)} y^{1-\alpha} |w_k|   |\nabla w_k| dxdy \\
& \qquad \qquad \qquad  \le C \| w_k \|_{X^{\alpha}(\R^{n+1}_+)} \left(\int_{\text{Supp}(|\nabla \eta|)} y^{1-\alpha} |w_k|^2  dxdy\right)^{\frac{1}{2}} \\
&\qquad \qquad \qquad  = o(1). 
\end{align*}
Thus, we have proved that  
$$k_{\alpha} \int_{\R_+^{n+1}} y^{1-\alpha} \langle \nabla w_k , \nabla (\eta^2 w_k )\rangle dxdy =  k_\alpha \int_{\R_+^{n+1}} y^{1-\alpha} |\eta \nabla  w_k  |^2 dxdy  + o(1).$$

Using the above estimate coupled with (\ref{estimate for grad term with cut-off (eta w_k)}), we  obtain
\begin{equation}
 \begin{aligned} 
 o(1) &= \langle \Psi' (w_k), \eta^2 w_k\rangle \\
 & = k_\alpha \int_{\R_+^{n+1}} y^{1-\alpha} |\eta \nabla w_k  |^2 dxdy  - \gamma \int_{K} \frac{\eta_*^2 |u_k|^2  }{|x|^{\alpha}}dx - \int_ {\R^n} \eta_*^2 |u_k|^{2_{\alpha}^*}   dx - \int_ {K} \frac{\eta_*^2 |u_k|^{2_{\alpha}^*(s)}   }{|x|^s}dx +o(1)\\
 &=k_\alpha \int_{\R_+^{n+1}} y^{1-\alpha} |\nabla (\eta w_k ) |^2 dxdy   - \int_ {\R^n} \eta_*^2 |u_k|^{2_{\alpha}^*}   dx +o(1)\\
 & \ge \|\eta w_k\|^2 - \int_ {\R^n} \eta_*^2 |u_k|^{2_{\alpha}^*}   dx +o(1), \quad \text{ as } {k \to \infty},
 \end{aligned}
 \end{equation}
 where $K= \text{Supp}(\eta_*).$ Therefore,
\begin{equation}
\|\eta w_k\|^2 \le  \int_ {\R^n} |\eta_*u_k|^2 |u_k|^{2_{\alpha}^*-2}   dx +o(1) \quad \text{ as } {k \to \infty}.
\end{equation}
By H\"older's inequality, and using  the definition of $S(n,\alpha,\gamma,0),$ we then get that
\begin{equation}
\begin{aligned}
\|\eta w_k\|^2  &\le  \left(\int_ {\R^n} |\eta_*u_k|^{2_{\alpha}^*}dx \right)^ {\frac{2}{2_{\alpha}^*}} \left(\int_ {\R^n} |u_k|^{2_{\alpha}^*} dx \right)^{\frac{2_{\alpha}^*-2}{2_\alpha^*}} +o(1)\\
& \le  S(n,\alpha,\gamma,0)^{-1} \| \eta w_k \|^2 \left(\int_ {\R^n} |u_k|^{2_{\alpha}^*} dx \right)^{\frac{2_{\alpha}^*-2}{2_\alpha^*}} +o(1). 
\end{aligned}
\end{equation}
Thus,
\begin{equation}\label{estimate for new norm with  (eta w_k) by o(1) }
\left[ 1- S(n,\alpha,\gamma,0)^{-1} \left(\int_ {\R^n} |u_k|^{2_{\alpha}^*} dx \right)^{\frac{2_{\alpha}^*-2}{2_\alpha^*}} \right] \| \eta w_k \|^2 \le o(1).
\end{equation}
In addition, it follows from (\ref{P-S condition (Lim) on minimizing sequence}) that 
$$\Psi(w_k) - \frac{1}{2} \langle \Psi'(w_k), w_k\rangle = c+ o(1),$$
that is, 
\begin{equation} \label{Summation of Hardy and Hardy-Sobolev terms for P-S sequence is c+o(1) }
(\frac{1}{2} - \frac{1}{2_{\alpha}^*})  \int_{\R^n} |u_k|^{2_{\alpha}^*} dx +  (\frac{1}{2} - \frac{1}{2_{\alpha}^*(s)})   \int_{\R^n} \frac{|u_k|^{2_{\alpha}^*(s)}}{|x|^{s}}dx  = c+ o(1), 
\end{equation}
from which follows that 
  \begin{equation} \label{upper bounded for Hardy term P-S sequence 2n/c}
\int_{\R^n} |u_k|^{2_{\alpha}^*} dx \le \frac{2n}{\alpha} c +o(1), \quad \text{ as } {k \to \infty}.
 \end{equation}
 Plugging (\ref{upper bounded for Hardy term P-S sequence 2n/c}) into (\ref{estimate for new norm with  (eta w_k) by o(1) }), we obtain that 
  $$ \left[ 1- S(n,\alpha,\gamma,0)^{-1} (\frac{2n}{\alpha} c )^{\frac{\alpha}{n}} \right] \| \eta w_k \|^2 \le o(1), \text{ as } {k \to \infty}.  $$
 
On the other hand, by the upper bound (\ref{Definition of C^star}) on $c,$ we have that 
$$ c <  \frac{\alpha}{2n} S(n,\alpha,\gamma,0)^{\frac{n}{\alpha}}.$$ 
This yield that $  1- S(n,\alpha,\gamma,0)^{-1} (\frac{2n}{\alpha} c )^{\frac{\alpha}{n}} > 0,$ and therefore, $ \lim\limits_{k \to \infty} \| \eta w_k \|^2 = 0. $

Using  (\ref{extension norm}) and  (\ref{comparable norms}), we obtain that 
$$\lim\limits_{k \to \infty} \int_{\R^n} |(-\Delta)^{\frac{\alpha}{4}} (\eta_* u_k)|^2dx = \lim\limits_{k \to \infty}  k_{\alpha}\int_{\R^{n+1}_+} y^{1-\alpha} |\nabla (\eta w_k)|^2  dxdy =0.$$
It also follows from the definition of $S(n, \alpha, \gamma,0) $  that 
$\lim\limits_{k \to \infty}  \int_{\R^n} | \eta_* u_k |^{2^*_{\alpha}}dx =0$, hence,
$$\lim\limits_{k \to \infty} \int_{\R^n} |(-\Delta)^{\frac{\alpha}{4}} (\eta_* u_k)|^2dx= \lim\limits_{k \to \infty}  \int_{\R^n} | \eta_* u_k |^{2^*_{\alpha}} dx=0.$$
Since 
${\eta_*}_{|_D} \equiv 1,$ the last equality yields  (\ref{lim of Sobolev and grad terms on D are zero}).  
\end{proof}

\begin{lemma} \label{Lemma relation between theta zeta and mu with S(n,alpha,gamma,s)}
Let $(w_k)_{k \in \mathbb{N}}$ be Palais-Smale sequence as in Proposition \ref{Proposition lim of sobolev term in small ball when miniming sequence converges weakly to zero} and let  $u_k:=Tr (w_k)= w_k(.,0)$. For any $\delta > 0,$ set 
 \begin{equation} \label{Definition theta, zeta and mu}
 \begin{aligned}
 &\theta:= \limsup\limits_{k \to \infty} \int_{B_{\delta}} |u_k|^{2^*_{\alpha}}dx; \qquad  \zeta:= \limsup\limits_{k \to \infty} \int_{B_{\delta}} \frac{|u_k|^{2^*_{\alpha}(s)}}{|x|^s}dx \, \,  {\rm and} \\
&\mu:= \limsup\limits_{k \to \infty} \int_{B_\delta} \left( |(-\Delta)^{\frac{\alpha}{4}}  u_k|^2dx - \gamma  \frac{|u_k|^2}{|x|^{\alpha}} \right) dx ,
 \end{aligned}
 \end{equation}
 where $u:= w(.,0)$. If $w_k \rightharpoonup 0 $ in $X^{\alpha} (\R_+^{n+1})$ as ${k \to \infty},$ then the following hold:
\begin{enumerate}
\item $ \theta^{\frac{2}{2^*_{\alpha}}} \le S(n,\alpha,\gamma,0)^{-1} \mu \quad \text{ and } \quad  \zeta^{\frac{2}{2^*_{\alpha}(s)}} \le S(n,\alpha,\gamma,s)^{-1} \mu.$
\item $\mu \le \theta + \zeta.$
\end{enumerate}
\end{lemma}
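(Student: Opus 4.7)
The strategy is concentration-compactness via the Caffarelli-Silvestre extension, localized by a smooth cutoff. For the fixed $\delta>0$, pick $\varepsilon>0$ and a cutoff $\eta\in C_0^\infty(\overline{\R_+^{n+1}})$ with $0\le\eta\le 1$, $\eta\equiv 1$ on $B_\delta^+:=\{(x,y)\in\R_+^{n+1}:|(x,y)|<\delta\}$, and $\mathrm{supp}\,\eta\subset B_{\delta+\varepsilon}^+$; set $\eta_*:=\eta(\cdot,0)$. Throughout, I would use the compact embedding $H^1(K,y^{1-\alpha})\hookrightarrow L^2(K,y^{1-\alpha})$ on bounded $K$ (valid since $y^{1-\alpha}\in A_2$) together with $w_k\rightharpoonup 0$ to absorb all the cutoff error terms into $o(1)$.

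For (2), I would test the Palais-Smale condition $\Psi'(w_k)\to 0$ against $\eta^2 w_k\in X^\alpha(\R_+^{n+1})$. Writing $\nabla(\eta^2 w_k)=\eta^2\nabla w_k+2\eta w_k\nabla\eta$, the cross term $2k_\alpha\int y^{1-\alpha}\eta w_k\nabla\eta\cdot\nabla w_k\,dxdy$ is $o(1)$: Cauchy-Schwarz bounds it by a constant times $\|w_k\|_{X^\alpha(\R_+^{n+1})}(\int_{\mathrm{supp}\,\nabla\eta}y^{1-\alpha}|w_k|^2)^{1/2}$, whose second factor vanishes by the Muckenhoupt compact embedding. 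What remains is
\[
k_\alpha\int y^{1-\alpha}\eta^2|\nabla w_k|^2\,dxdy-\gamma\int\frac{\eta_*^2|u_k|^2}{|x|^\alpha}\,dx=\int\eta_*^2|u_k|^{2_\alpha^*}\,dx+\int\frac{\eta_*^2|u_k|^{2_\alpha^*(s)}}{|x|^s}\,dx+o(1).
\]
Using $\eta_*\le\chi_{B_{\delta+\varepsilon}}$ on the right and $\eta^2\ge\chi_{B_\delta^+}$ on the left (so that the left-hand side dominates the local Dirichlet energy identified with $\mu$ via the Caffarelli-Silvestre isometry), taking $\limsup_k$ and then $\varepsilon\to 0$ yields $\mu\le\theta+\zeta$.

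For (1), I would apply the fractional Sobolev-Hardy inequality (\ref{fractional H-S-M inequality}) with $s=0$ to $\eta_*u_k\in H^{\alpha/2}(\R^n)$, then use the trace inequality (\ref{trace inequality between extension norm and fractional sobolev}) to get $\int|(-\Delta)^{\alpha/4}(\eta_*u_k)|^2\,dx\le k_\alpha\int y^{1-\alpha}|\nabla(\eta w_k)|^2\,dxdy$, and expand the gradient exactly as in Part (2), with the cross terms again swallowed by $o(1)$. This yields
\[
\left(\int|\eta_*u_k|^{2_\alpha^*}\,dx\right)^{2/2_\alpha^*}\le S(n,\alpha,\gamma,0)^{-1}\left(k_\alpha\int y^{1-\alpha}\eta^2|\nabla w_k|^2\,dxdy-\gamma\int\frac{\eta_*^2|u_k|^2}{|x|^\alpha}\,dx\right)+o(1).
\]
Using $\eta_*\ge\chi_{B_\delta}$ on the left and $\eta^2\le\chi_{B_{\delta+\varepsilon}^+}$ on the right, passing to $\limsup_k$, and letting $\varepsilon\to 0$ gives $\theta^{2/2_\alpha^*}\le S(n,\alpha,\gamma,0)^{-1}\mu$. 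The Hardy-Sobolev inequality (\ref{Fractional H-S inequality}) applied to $\eta_*u_k$ in place of the Sobolev-Hardy one gives the companion bound $\zeta^{2/2_\alpha^*(s)}\le S(n,\alpha,\gamma,s)^{-1}\mu$ by an identical argument.

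The main obstacle is the cutoff bookkeeping: the three quantities $\theta,\zeta,\mu$ are defined through sharp integrals over $B_\delta$, while both the Sobolev-type inequalities and the test-function in $\Psi'(w_k)\to 0$ require a smooth $\eta$ straddling $\chi_{B_\delta}$. One must orient $\eta$ so that the inequality runs the right way at each step, absorbing the margin as $\varepsilon\to 0$. A subtle point is reconciling the extension-side quantity $k_\alpha\int y^{1-\alpha}\eta^2|\nabla w_k|^2$ with the trace-side definition of $\mu$; this is done through the Caffarelli-Silvestre identification of the local Dirichlet energy on $B_\delta^+$ with the corresponding fractional seminorm. Once this is set up, the $A_2$-weighted compact embedding makes the vanishing of every cross term routine, and both parts follow.
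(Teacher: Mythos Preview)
Your strategy is the paper's: localize with a cutoff, test the Palais--Smale relation against $\eta^2 w_k$ for part~(2), apply the defining inequality for $S(n,\alpha,\gamma,\cdot)$ to $\eta w_k$ for part~(1), and absorb every gradient cross term into $o(1)$ via the $A_2$-weighted compact embedding on $\mathrm{supp}(\nabla\eta)$. The one methodological difference is how the annular margin is handled. You shrink $\varepsilon\to 0$; the paper instead fixes a single $\eta$ with $\eta_*\equiv 1$ on $B_\delta$ and invokes the preceding Lemma~\ref{Lemma limi of Hardy-Sobolev and grad terms are zero on D}, which shows that all the relevant integrals vanish in the limit over any $D\subset\subset\R^n\setminus\{0\}$, hence in particular over $\mathrm{supp}(\eta_*)\setminus B_\delta$. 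Your $\varepsilon\to 0$ step needs exactly this fact (to conclude that the $B_{\delta+\varepsilon}$-quantities coincide with the $B_\delta$-ones in the $\limsup$), so you are implicitly using that lemma and should cite it.

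The step you flag as subtle is genuinely so, and your proposed resolution is not right: there is no local Caffarelli--Silvestre isometry identifying $k_\alpha\int_{B_\delta^+}y^{1-\alpha}|\nabla w_k|^2\,dxdy$ with $\int_{B_\delta}|(-\Delta)^{\alpha/4}u_k|^2\,dx$. The fractional Laplacian is nonlocal, so restricting to $B_\delta$ after applying $(-\Delta)^{\alpha/4}$ bears no simple relation to restricting the extension energy to $B_\delta^+$. The paper does not attempt this local identification; it stays with the \emph{global} quantity $\|\eta w_k\|_{X^\alpha}^2$, passes to $\|(-\Delta)^{\alpha/4}(\eta_*u_k)\|_{L^2(\R^n)}^2$ via~(\ref{extension norm}), and only then splits $\R^n$ into $B_\delta$ and the annulus $\mathrm{supp}(\eta_*)\setminus B_\delta$, discarding the annular piece by Lemma~\ref{Lemma limi of Hardy-Sobolev and grad terms are zero on D}. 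You should route the argument that way rather than through a local isometry that does not exist.
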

\begin{proof}[Proof of Lemma \ref{Lemma relation between theta zeta and mu with S(n,alpha,gamma,s)}]
First note that it follows from Lemma \ref{Lemma limi of Hardy-Sobolev and grad terms are zero on D} that $\theta, \zeta$ and $\mu $ are well-defined and are independent of the choice of $\delta > 0.$
Let now $\eta \in C_0^{\infty}(\R_+^{n+1})$ be a cut-off function such that 
$\eta_*:=\eta(.,0) \equiv 1$  in $B_\delta,$ and $0 \le \eta \le 1$ in $\R_+^{n+1}.$

1.  Since $\eta w_k \in X^{\alpha}(\R_+^{n+1})$, we get from the definition of $S(n,\alpha,\gamma,s)$ that
 \begin{equation}\label{fractional Hardy-Sobolev inequality when s=0}
 S(n,\alpha,\gamma,0)(\int_{\R^n} |\eta_* u_k|^{2_{\alpha}^*}dx)^\frac{2}{2_{\alpha}^*}\le    k_{\alpha}\int_{\R^{n+1}_+} y^{1-\alpha} |\nabla(\eta w_k)|^2  dxdy - \gamma \int_{\R^n} \frac{|\eta_* u_k|^2}{|x|^{\alpha}} dx.
 \end{equation}

On the other hand, from the definition of  $\eta$ and (\ref{extension norm}), it follows that 
\begin{align*}
&k_{\alpha}\int_{\R^{n+1}_+} y^{1-\alpha} |\nabla(\eta w_k)|^2  dxdy - \gamma \int_{\R^n} \frac{|\eta_* u_k|^2}{|x|^{\alpha}} dx = \int_{\R^n} \left( |(-\Delta)^{\frac{\alpha}{4}}  (\eta_* u_k)|^2 - \gamma  \frac{|\eta_*u_k|^2}{|x|^{\alpha}} \right) dx \\
&=   \int_{B_\delta} \left( |(-\Delta)^{\frac{\alpha}{4}}  u_k|^2 - \gamma  \frac{|u_k|^2}{|x|^{\alpha}} \right) dx + \int_{\text{ Supp}(\eta_*) \setminus B_\delta} \left( |(-\Delta)^{\frac{\alpha}{4}}  (\eta_* u_k)|^2 - \gamma  \frac{|\eta_*u_k|^2}{|x|^{\alpha}} \right)   dx,
\end{align*}
and$$(\int_{B_\delta} |u_k|^{2_{\alpha}^*}dx)^\frac{2}{2_{\alpha}^*} \le  (\int_{\R^n} |\eta_* u_k|^{2_{\alpha}^*}dx)^\frac{2}{2_{\alpha}^*}. $$

Note that  $ \text{ Supp}(\eta_*) \setminus B_\delta \subset\subset \R^n \setminus \{0\}.$ Therefore, taking the upper limits at both sides of (\ref{fractional Hardy-Sobolev inequality when s=0}), and using Lemma \ref{Lemma limi of Hardy-Sobolev and grad terms are zero on D}, we get that
 
$$S(n,\alpha,\gamma,0) (\int_{B_\delta} |u_k|^{2_{\alpha}^*}dx)^\frac{2}{2_{\alpha}^*}\le   \int_{B_\delta} \left( |(-\Delta)^{\frac{\alpha}{4}}  u_k|^2dx - \gamma  \frac{|u_k|^2}{|x|^{\alpha}} \right) dx  +o(1) \ \text{ as }{k \to \infty},$$

which gives 

$$\theta^{\frac{2}{2^*_{\alpha}}} \le S(n,\alpha,\gamma,0)^{-1} \mu.$$

Similarly, we can prove that $$\zeta^{\frac{2}{2^*_{\alpha}(s)}} \le S(n,\alpha,\gamma,s)^{-1} \mu.$$

2.  Since $\eta^2 w_k \in X^{\alpha}(\R_+^{n+1})$ and  $\langle \Psi'(w_k) , \eta^2 w_k \rangle =o(1) \text{ as } {k \to \infty}$, we have 

\begin{equation}\label{Estimating Psi with eta w_k}
\begin{aligned}
o(1) &= \langle \Psi'(w_k) , \eta^2 w_k \rangle  \\
&=k_{\alpha} \int_{\R_+^{n+1}} y^{1-\alpha} \langle \nabla w_k , \nabla (\eta^2 w_k ) \rangle dxdy - \gamma \int_{\R^n} \frac{|\eta_*u_k|^2}{|x|^{\alpha}} dx - \int_ {\R^n} \eta^2_*|u_k|^{2_{\alpha}^*}   dx - \int_{\R^n} \frac{\eta^2_* |u_k|^{2_{\alpha}^*(s)}}{|x|^{s}}dx \\ 
& = \left(k_{\alpha} \int_{\R_+^{n+1}} y^{1-\alpha}  |\eta \nabla w_k|^2   dxdy - \gamma \int_{\R^n} \frac{|\eta_*u_k|^2}{|x|^{\alpha}} dx \right) - \int_ {\R^n} \eta^2_* |u_k|^{2_{\alpha}^*}   dx - \int_{\R^n} \frac{\eta^2_* |u_k|^{2_{\alpha}^*(s)}}{|x|^{s}}dx \\
& \quad + k_{\alpha} \int_{\R_+^{n+1}} y^{1-\alpha} w_k \langle \nabla (\eta^2) ,\nabla w_k \rangle   dxdy.
\end{aligned}
\end{equation}

By H\"older's inequality, and the fact that ${w_k \to 0}$ in  $L^2(\text{Supp} (|\nabla \eta|), y^{1-\alpha}),$ we obtain that 

\begin{align*}
 \left| k_{\alpha} \int_{\R_+^{n+1}} y^{1-\alpha} w_k \langle \nabla (\eta^2) ,\nabla w_k \rangle   dxdy \right| &\le k_{\alpha} \int_{\R_+^{n+1}} y^{1-\alpha} |w_k|  |\nabla (\eta^2)| |\nabla w_k|   dxdy \\ 
&\le C  \int_{\text{Supp}(|\nabla \eta|)} y^{1-\alpha} |w_k|   |\nabla w_k| dxdy \\
& \le C \| w_k \|_{X^{\alpha}(\R^{n+1}_+)} \|w_k\|_{L^2(\text{Supp} (|\nabla \eta|), y^{1-\alpha})}\\
& \le o(1) \quad \hbox{as ${k \to \infty}.$}
\end{align*}
Plugging the above estimate into (\ref{Estimating Psi with eta w_k}) and using (\ref{extension norm}), we get that  

\begin{equation*}
\begin{aligned}
o(1) &= \langle \Psi'(w_k) , \eta^2 w_k \rangle  \\
& = \left(k_{\alpha} \int_{\R_+^{n+1}} y^{1-\alpha}  |\nabla (\eta w_k)|^2   dxdy - \gamma \int_{\R^n} \frac{|\eta_*u_k|^2}{|x|^{\alpha}} dx \right) - \int_ {\R^n} \eta^2_* |u_k|^{2_{\alpha}^*}   dx - \int_{\R^n} \frac{\eta^2_* |u_k|^{2_{\alpha}^*(s)}}{|x|^{s}}dx \\
& = \int_{\R^n} \left( |(-\Delta)^{\frac{\alpha}{4}} (\eta_* u_k)|^2  - \gamma \frac{|\eta_*u_k|^2}{|x|^{\alpha}} \right) dx  - \int_ {\R^n} \eta^2_* |u_k|^{2_{\alpha}^*}   dx - \int_{\R^n} \frac{\eta^2_* |u_k|^{2_{\alpha}^*(s)}}{|x|^{s}}dx \\
& \ge \int_{B_\delta} \left( |(-\Delta)^{\frac{\alpha}{4}}  u_k|^2  - \gamma \frac{|u_k|^2}{|x|^{\alpha}} \right) dx  - \int_ {B_\delta}  |u_k|^{2_{\alpha}^*}   dx - \int_{B_\delta} \frac{ |u_k|^{2_{\alpha}^*(s)}}{|x|^{s}}dx \\
&- \int_{\text{ Supp}(\eta_*) \setminus B_\delta} \left(\gamma  \frac{|\eta_*u_k|^2}{|x|^{\alpha}} dx  +  \eta^2_* |u_k|^{2_{\alpha}^*}   dx + \frac{\eta^2_* |u_k|^{2_{\alpha}^*(s)}}{|x|^{s}} \right) dx +o(1).
\end{aligned}
\end{equation*}

Noting that $\text{ Supp}(\eta_*) \setminus B_\delta \subset\subset \R^n \setminus \{0\},$ and taking the upper limits on both sides, we get that $\mu \le \theta+\zeta.$
\end{proof}

\begin{proof}[Proof of Proposition \ref{Proposition lim of sobolev term in small ball when miniming sequence converges weakly to zero}]

It follows from Lemma \ref{Lemma relation between theta zeta and mu with S(n,alpha,gamma,s)} that 

$$ \theta^{\frac{2}{2^*_\alpha}} \le S(n,\alpha,\gamma,0)^{-1} \mu \le  S(n,\alpha,\gamma,0)^{-1} \theta + S(n,\alpha,\gamma,0)^{-1} \zeta, $$
which gives

\begin{equation} \label{estimate for theta and zeta , using estimates in Lemmas}
\begin{aligned}
 &\theta^{\frac{2}{2^*_\alpha}} (1- S(n,\alpha,\gamma,0)^{-1}    \theta^{\frac{2^*_\alpha - 2}{2^*_\alpha}}) \le S(n,\alpha,\gamma,0)^{-1} \zeta.
 \end{aligned}
\end{equation}
On the other hand, by (\ref{Summation of Hardy and Hardy-Sobolev terms for P-S sequence is c+o(1) }), we have

$$\theta \le \frac{2n}{\alpha} c. $$

Substituting the last inequality into (\ref{estimate for theta and zeta , using estimates in Lemmas}), we get that 

$$ (1- S(n,\alpha,\gamma,0)^{-1}  (\frac{2n}{\alpha} c)^{\frac{\alpha }{n}})  \theta^{\frac{2}{2^*_\alpha}}  \le S(n,\alpha,\gamma,0)^{-1} \zeta. $$ 
Recall that the upper bounded (\ref{Definition of C^star}) on $c$ implies that

$$1- S(n,\alpha,\gamma,0)^{-1}  (\frac{2n}{\alpha} c)^{\frac{\alpha }{n}} > 0.$$
Therefore, there exists $\delta_1= \delta_1(n,\alpha,\gamma,c)>0$ such that $ \theta^{\frac{2}{2^*_\alpha}}  \le \delta_1 \zeta.$ Similarly, 
there exists $\delta_2= \delta_2(n,\alpha,\gamma,c,s)>0$ such that $  \zeta^{\frac{2}{2^*_\alpha(s)}}  \le \delta_2 \theta. $ These two inequalities yield that there exists $\epsilon_0= \epsilon_0 (n, \alpha, \gamma,c,s)>0$ such that 
\begin{equation} \label{Definition of epsilon_0}
 \text{ either } \quad  \theta= \zeta = 0 \quad  \text{ or } \ \quad \{\theta \ge \epsilon_0 \text{ and } \zeta \ge \epsilon_0 \}.
 \end{equation}
It follows from the definition of $\theta$ and $\zeta$ that 

 $$\text{ either }\limsup\limits_{k \to \infty} \int_{B_{\delta}} |u_k|^{2^*_{\alpha}}dx = \limsup\limits_{k \to \infty} \int_{B_{\delta}} \frac{|u_k|^{2^*_{\alpha}(s)}}{|x|^s}dx =0;$$
$$ \text{ or } \quad \ \ \limsup\limits_{k \to \infty} \int_{B_{\delta}} |u_k|^{2^*_{\alpha}}dx \ge \epsilon_0 \quad {\rm and} \quad \limsup\limits_{k \to \infty} \int_{B_{\delta}} \frac{|u_k|^{2^*_{\alpha}(s)}}{|x|^s}dx \ge \epsilon_0.$$
\end{proof} 
\subsection{End of proof of Theorem \ref{Theorem Main result in extended form}}

We shall first eliminate the possibility of a zero weak limit for the Palais-Smale sequence of $\Psi$, then we prove that the nontrivial weak limit is indeed a weak solution of Problem (\ref{Main problem.prime}). In the sequel $(w_k)_{k \in \mathbb{N}}$ will denote the Palais-Smale sequence for $\Psi$ obtained in Proposition \ref{Proposition lim of sobolev term in small ball when miniming sequence converges weakly to zero}. 

First we show that 
\begin{equation}\label{limsup}
 \limsup\limits_{k \to \infty} \int_{\R^n} |u_k|^{2^*_\alpha} dx > 0.
 \end{equation} 
Indeed, otherwise   
$\lim\limits_{k \to \infty} \int_{\R^n} |u_k|^{2^*_\alpha} dx =0,$ which once combined with the fact 
that $\langle \Psi'(w_k),w_k \rangle \to 0$ yields that 
$ \|w_k\|^2 = \int_ {\R^n} \frac{ |u_k|^{2_{\alpha}^*(s)}}{|x|^s} dx +o(1). $

By combining this estimate with the definition of $S(n, \alpha, \gamma, s)$, we obtain
$$
\left(\int_ {\R^n} \frac{ |u_k|^{2_{\alpha}^*(s)}}{|x|^s}dx\right)^{\frac{2}{2_\alpha^*(s)}}\le S(n, \alpha, \gamma, s)^{-1} \|w_k\|^2 \le S(n, \alpha, \gamma, s)^{-1} \int_ {\R^n} \frac{ |u_k|^{2_{\alpha}^*(s)}}{|x|^s} dx +o(1), $$
which implies that
$$\left(\int_ {\R^n} \frac{ |u_k|^{2_{\alpha}^*(s)}}{|x|^s}dx\right)^{\frac{2}{2_\alpha^*(s)}} \left[ 1-     S(n, \alpha, \gamma, s)^{-1}  (\int_ {\R^n} \frac{ |u_k|^{2_{\alpha}^*(s)}}{|x|^s} dx )^{\frac{2^*_\alpha(s)-2}{2^*_\alpha(s)}} \right] \le o(1).$$
It follows from  (\ref{Definition of C^star}) and (\ref{Summation of Hardy and Hardy-Sobolev terms for P-S sequence is c+o(1) })  that as ${k \to \infty}$, 
$$\int_ {\R^n} \frac{ |u_k|^{2_{\alpha}^*(s)}}{|x|^s} dx= 2c \frac{n-s}{\alpha-s} +o(1) \quad  \text{ and } \quad (1- S(n,\alpha,\gamma,s)^{-1}  (2 c \frac{n-s}{\alpha-s})^{\frac{\alpha-s}{n-s}}) >0.$$
Hence,
\begin{equation} \label{lim of Hardy-Sobolev term is zero (Contradiction)}
\lim\limits_{k \to \infty}\int_ {\R^n} \frac{ |u_k|^{2_{\alpha}^*(s)}}{|x|^s} dx = 0.
\end{equation}

Using that $\lim\limits_{k \to \infty} \int_{\R^n} |u_k|^{2^*_\alpha} dx =0,$ 
in conjunction with 
(\ref{lim of Hardy-Sobolev term is zero (Contradiction)}) and  
(\ref{Summation of Hardy and Hardy-Sobolev terms for P-S sequence is c+o(1) }), we get that  $c+o(1) = 0,$
which contradicts the fact that $c>0.$ This completes the proof of (\ref{limsup}).

Now, we show that for small enough $\epsilon >0$,  there exists another Palais-Smale sequence $(v_k)_{k \in \mathbb{N}}$ for $\Psi$ satisfying the properties of Proposition \ref{Proposition lim of sobolev term in small ball when miniming sequence converges weakly to zero}, which is also bounded in $X^\alpha(\R^{n+1}_+)$ and satisfies
\begin{equation}\label{epsilon}
\int_{B_1} |v_k(x,0)|^{2^*_\alpha} dx =\epsilon \quad \hbox{for all $k \in \mathbb{N}.$} 
\end{equation}

For that, consider $\epsilon_0$ as given in Proposition \ref{Proposition lim of sobolev term in small ball when miniming sequence converges weakly to zero}. Let  $\beta = \limsup\limits_{k \to \infty} \int_{\R^n} |u_k|^{2^*_\alpha} dx$, which is positive by (\ref{limsup}).  
Set $\epsilon_1 := \text{min} \{\beta , \frac{\epsilon_0}{2}\}$
and fix $\epsilon \in (0,\epsilon_1).$ Up to a subsequence, there exists by continuity a sequence of radii $(r_k )_k$  such that $ \int_{B_{r_k}} |u_k|^{2^*_\alpha} dx =\epsilon$
for each $k \in \mathbb{N}.$ Let now $$ v_k(x,y) := r_k^{\frac{n-\alpha}{2}} w_k(r_k x, r_k y) \quad  \text{ for } x \in \R^n \text{ and } y \in \R_+ .$$
It is clear that
\begin{equation}\label{epsilon}
\int_{B_1} |v_k(x,0)|^{2^*_\alpha} dx= \int_{B_{r_k}} |u_k|^{2^*_\alpha} dx =\epsilon \quad \hbox{for all $k \in \mathbb{N}.$} 
\end{equation}
It is easy to check that $(v_k)_{k \in \mathbb{N}}$ is also a Palais-Smale sequence for $\Psi$ that satisfies the properties of Proposition \ref{Proposition lim of sobolev term in small ball when miniming sequence converges weakly to zero}. We now show that it is bounded in $X^\alpha(\R^{n+1}_+)$ .

Since  $ (v_k)_{k \in \mathbb{N}} $ is a Palais-Smale sequence, there exists positive constants $ C_1, C_2 >0$ such that 
\begin{equation}\label{Use P-S condition (v_k) to prove boundedness}
\begin{aligned} 
C_1 + C_2 \|v_k\| &\ge \Psi(v_k) - \frac{1}{2^*_\alpha(s)} \langle \Psi'(v_k), v_k \rangle \\
&  \ge\left( \frac{1}{2} - \frac{1}{2^*_\alpha(s)} \right) \|v_k\|^2+ \left(\frac{1}{2^*_\alpha}  - \frac{1}{2^*_\alpha(s)} \right) \int_{\R^n} |v_k(x,0)|^{2^*_\alpha} dx\\
& \ge  \left( \frac{1}{2} - \frac{1}{2^*_\alpha(s)} \right) \|v_k\|^2.
\end{aligned}
\end{equation}
The last inequality holds since $2<2^*_\alpha(s) < 2^*_\alpha.$ Combining (\ref{Use P-S condition (v_k) to prove boundedness}) with (\ref{comparable norms}), we obtain that $ (v_k)_{k \in \mathbb{N}} $ is bounded in  $X^\alpha(\R^{n+1}_+).$ 

It follows that  there exists a subsequence -- still denoted by $v_k$ -- such that $ v_k \rightharpoonup v \text{ in } X^\alpha(\R^{n+1}_+)$ as ${k \to \infty}.$ We claim that $v$ is a  nontrivial weak solution of (\ref{Main problem.prime}). Indeed, 
if $v \equiv 0$, then  Proposition \ref{Proposition lim of sobolev term in small ball when miniming sequence converges weakly to zero} yields that 
 $$ \text{ either } \ \limsup\limits_{k \to \infty} \int_{B_1} |v_k(x,0)|^{2^*_\alpha} dx=0 \ \text{ or } \ \limsup\limits_{k \to \infty} \int_{B_1} |v_k(x,0)|^{2^*_\alpha} dx \ge \epsilon_0.$$
Since $\epsilon \in (0,\frac{\epsilon_0}{2}),$ this is in contradiction with (\ref{epsilon}),  
thus, $v \not\equiv 0.$ 

To show that  $v \in X^\alpha(\R^{n+1}_+) $ is a  weak solution of (\ref{Main problem.prime}), consider any $\varphi \in C^\infty_0(\R^{n+1}_+),$ and write
\begin{equation}\label{Psi'(v_k,varphi)=o(1) }
\begin{aligned}
o(1) &= \langle \Psi'(v_k) , \varphi \rangle\\ &= k_{\alpha} \int_{\R_+^{n+1}} y^{1-\alpha}  \langle \nabla v_k , \nabla \varphi \rangle dxdy - \gamma \int_{\R^n} \frac{ v_k(x,0) \varphi   }{|x|^{\alpha}}dx \\
&\quad - \int_ {\R^n}  |v_k(x,0)|^{2_{\alpha}^*-2} v_k(x, 0) \varphi   dx - \int_ {\R^n} \frac{ |v_k(x,0)|^{2_{\alpha}^*(s)-2} v_k(x, 0) \varphi  }{|x|^s}dx.
\end{aligned}
\end{equation}
Since $ v_k \rightharpoonup v \text{ in } X^\alpha(\R^{n+1}_+)$ as ${k \to \infty},$ we have that 
$$\int_{\R_+^{n+1}} y^{1-\alpha}  \langle \nabla v_k , \nabla \varphi \rangle dxdy \rightarrow \int_{\R_+^{n+1}} y^{1-\alpha}  \langle \nabla v , \nabla \varphi \rangle dxdy, \quad   \forall \varphi \in C^\infty_0(\R^{n+1}_+).$$

In addition, the boundedness of $v_k$ in $X^\alpha(\R^{n+1}_+) $ yields that $ v_k(.,0),$  $|v_k(.,0)|^{2_{\alpha}^*-2} v_k(.,0) $ and  $ |v_k(.,0)|^{2_{\alpha}^*(s)-2} v_k(.,0) $ are bounded in $L^2(\R^n, |x|^{-\alpha}),$ $ L^{\frac{2_{\alpha}^*}{2_{\alpha}^*-1}}(\R^n)$ and $ L^{\frac{2_{\alpha}^*(s)}{2_{\alpha}^*(s)-1}}(\R^n, |x|^{-s})$ respectively. Therefore, we have the following weak convergence:
\begin{align*}
&v_k(.,0)\rightharpoonup v(.,0) \quad \text{ in } L^2(\R^n, |x|^{-\alpha})\\
&|v_k(.,0)|^{2_{\alpha}^*-2} v_k(.,0) \rightharpoonup |v(.,0)|^{2_{\alpha}^*-2} v(.,0) \quad \text{ in } L^{\frac{2_{\alpha}^*}{2_{\alpha}^*-1}}(\R^n) \\
&|v_k(.,0)|^{2_{\alpha}^*(s)-2} v_k(.,0) \rightharpoonup |v(.,0)|^{2_{\alpha}^*(s)-2} v(.,0) \quad \text{ in }  L^{\frac{2_{\alpha}^*(s)}{2_{\alpha}^*(s)-1}}(\R^n, |x|^{-s}).
\end{align*}
Thus, taking limits as ${k \to \infty}$ in (\ref{Psi'(v_k,varphi)=o(1) }), we obtain that
 
\begin{equation*} \label{Psi'(v, varphi) = 0}
\begin{aligned}
0 &= \langle \Psi'(v) , \varphi \rangle\\ &= k_{\alpha} \int_{\R_+^{n+1}} y^{1-\alpha} \langle \nabla v , \nabla \varphi \rangle dxdy - \gamma \int_{\R^n} \frac{ v(x,0) \varphi   }{|x|^{\alpha}}dx\\
& \quad - \int_ {\R^n} |v(x, 0)|^{2_{\alpha}^*-2} v(x,0)\varphi   dx - \int_ {\R^n} \frac{ |v(x,0)|^{2_{\alpha}^*(s)-2} v(x, 0) \varphi  }{|x|^s}dx . 
\end{aligned}
\end{equation*}
Hence $v$ is a weak solution of (\ref{Main problem.prime}). \\

{\bf Acknowledgments:} Part of this work was done while the authors were visiting the Fields Institute for Research in  Mathematical Sciences (Toronto), during the Thematic program on variational problems in Physics, Economics and Geometry. The authors would like to thank the Fields Institute for its support and its hospitality. 

\end{document}